\title{On viscosity solutions of path-dependent Hamilton--Jacobi--Bellman--Isaacs equations for fractional-order systems\thanks{Submitted to the editors September 6, 2021.
\funding{This work was supported by the Russian Science Foundation, project~21-71-10070, \url{https://rscf.ru/en/project/21-71-10070/}}}}
\author{Mikhail I. Gomoyunov\thanks{N.N.~Krasovskii Institute of Mathematics and Mechanics of the Ural Branch of the Russian Academy of Sciences, Ekaterinburg, 620108, Russia, and Ural Federal University, Ekaterinburg, 620002, Russia (\email{m.i.gomoyunov@gmail.com})}}
\def\rd{\mathrm{d}}
\begin{document}

\maketitle

\begin{abstract}
    This paper deals with a two-person zero-sum differential game for a dynamical system described by a Caputo fractional differential equation of order $\alpha \in (0, 1)$ and a Bolza cost functional.
    The differential game is associated to the Cauchy problem for the path-dependent Hamilton--Jacobi--Bellman--Isaacs equation with so-called fractional coinvariant derivatives of the order $\alpha$ and the corresponding right-end boundary condition.
    A notion of a viscosity solution of the Cauchy problem is introduced, and the value functional of the differential game is characterized as a unique viscosity solution of this problem.
\end{abstract}

\begin{keywords}
    differential game, Caputo fractional derivative, value functional, path-dependent Hamilton--Jacobi equation, fractional coinvariant derivatives, viscosity solution, minimax solution
\end{keywords}

\begin{AMS}
    26A33, 34A08, 49L25, 35D40
\end{AMS}

\section{Introduction}

    In this paper, we study a two-person zero-sum differential game (see, e.g.,~\cite{Isaacs_1965,Krasovskii_Subbotin_1988,Yong_2015,Cardaliaguet_Rainer_2018}) involving a dynamical system described by a Caputo fractional differential equation of order $\alpha \in (0, 1)$ (see, e.g.,~\cite{Samko_Kilbas_Marichev_1993,Kilbas_Srivastava_Trujillo_2006,Diethelm_2010}) and a Bolza cost functional, which the first player tries to minimize while the second player tries to maximize.
    In accordance with~\cite{Gomoyunov_2021_Mathematics,Gomoyunov_Lukoyanov_2021}, we associate the differential game to the Cauchy problem for the Hamilton--Jacobi--Bellman--Isaacs (HJBI) equation with so-called fractional coinvariant ($ci$-) derivatives of the order $\alpha$ (see, e.g.,~\cite{Kim_1999,Gomoyunov_2020_SIAM} and also~\cite{Gomoyunov_Lukoyanov_Plaksin_2021}) and the corresponding right-end boundary condition.
    It should be noted that the path-dependent nature of the Caputo fractional derivative makes it necessary to consider the value of the differential game as a non-anticipative functional on a certain space of paths, and, respectively, the HJBI equation can be classified as path-dependent (in this connection, see, e.g.,~\cite{Pham_Zhang_2014,Kaise_2015,Tang_Zhang_2015,Ekren_Touzi_Zhang_2016_1,Bayraktar_Keller_2018,Cosso_Russo_2019_Osaka,Saporito_2019,Gomoyunov_Lukoyanov_Plaksin_2021}).

    As stated in~\cite[Theorem~1]{Gomoyunov_2021_Mathematics}, if the value functional of the differential game under consideration is $ci$-smooth of the order $\alpha$ (which means that it is continuous and has continuous $ci$-derivatives of the order $\alpha$), then it is characterized as a unique solution of the associated Cauchy problem in a classical sense.
    However, the value functional usually does not have these smoothness properties, which leads to the need to introduce and investigate solutions of this Cauchy problem in a generalized sense.

    Following~\cite{Subbotin_1995} and, in a path-dependent framework,~\cite{Lukoyanov_2003_1} (see also~\cite{Gomoyunov_Lukoyanov_Plaksin_2021} and the references therein), previous works in this direction~\cite{Gomoyunov_2020_IMM_Eng,Gomoyunov_2021_IMM_Eng,Gomoyunov_2021_ESAIM} focused on the development of the minimax approach to a notion of a generalized solution.
    In particular, the questions of well-posedness of minimax solutions, consistency of minimax solutions with classical solutions, as well as and non-local and infinitesimal criteria of minimax solutions were addressed.
    Moreover, relying on these results, it was proved (see~\cite[Theorem~2]{Gomoyunov_2021_Mathematics} and also~\cite[Theorem~1]{Gomoyunov_Lukoyanov_2021}) that the value functional of the differential game under consideration coincides with a unique minimax solution of the associated Cauchy problem.

    In this paper, we apply another technique and, following~\cite{Crandall_Lions_1983} and, in a path-dependent framework,~\cite{Soner_1988,Lukoyanov_2007_IMM_Eng,Plaksin_2021}, develop the viscosity approach.
    Our main objectives are to propose an appropriate notion of a viscosity solution of the investigated Cauchy problem and to characterize the value functional of the considered differential game as a unique viscosity solution of this problem.

    More precisely, we give a definition of a viscosity solution of the Cauchy problem through $ci$-smooth of the order $\alpha$ test functionals and a sequence of compact subsets of the path space, each of which is strongly invariant with respect to the considered fractional-order dynamical system and the union of which covers the path space.
    Furthermore, we require that a viscosity solution satisfies a certain local Lipschitz continuity condition with respect to the path variable.

    Then, we show that the value functional of the differential game is a viscosity solution of the Cauchy problem.
    To this end, we use the facts that the value functional is a minimax solution of this problem by~\cite[Theorem~2]{Gomoyunov_2021_Mathematics} and that the value functional meets the additional Lipschitz continuity requirement due to~\cite[Lemma~1]{Gomoyunov_Lukoyanov_2021} and apply the arguments from~\cite[Theorem~1]{Lukoyanov_2007_IMM_Eng}.

    After that, we establish uniqueness of a viscosity solution of the Cauchy problem.
    In general, the proof of this result is carried out by the scheme from~\cite[Theorem~2]{Lukoyanov_2007_IMM_Eng}.
    However, the key part of the proof, which is the construction of appropriate test functionals, is different.
    This construction takes into account the peculiarities of dealing with the HJBI equation involving the fractional $ci$-derivatives and can be considered as the major contribution of the paper.
    It should be noted also that the Lipschitz continuity requirement is crucial for the given proof, since, similarly to, e.g.,~\cite[Lemma~7.6]{Plaksin_2021}, it allows us to derive certain boundedness properties of $ci$-gradients of the order $\alpha$ of the obtained test functionals.

    The paper is organized as follows.
    After some preliminaries in~\cref{section_Preliminaries}, we describe the differential game, formulate the associated Cauchy problem, and present the necessary properties of the value functional in~\cref{section_Differential_Game_and_HJBI_Equation}.
    We give the definition of a viscosity solution of the Cauchy problem and show that the value functional is such a viscosity solution in~\cref{section_Viscosity_Solutions}.
    The uniqueness theorem for viscosity solutions is proved in~\cref{section_Uniqueness_Theorem}.
    The proofs of auxiliary statements are relegated to~\cref{Appendix}.

\section{Preliminaries}
\label{section_Preliminaries}

    Let $n \in \mathbb{N}$, $T > 0$, and $\alpha \in (0, 1)$ be fixed.
    By $\|\cdot\|$ and $\langle \cdot, \cdot \rangle$, the Euclidean norm and inner product in $\mathbb{R}^n$ are denoted.
    Given $R \geq 0$, let $B_R$ stand for the closed ball in $\mathbb{R}^n$ centered at the origin of radius $R$.

    Following, e.g.,~\cite[Definition~2.3]{Samko_Kilbas_Marichev_1993}, we introduce the linear space $AC^\alpha$ of all functions $x \colon [0, T] \to \mathbb{R}^n$ each of which can be represented in the form
    \begin{equation} \label{x_f}
        x(\tau)
        = x(0) + \frac{1}{\Gamma(\alpha)} \int_{0}^{\tau} \frac{f(\xi)}{(\tau - \xi)^{1 - \alpha}} \, \rd \xi
        \quad \forall \tau \in [0, T]
    \end{equation}
    for some (Lebesgue) measurable and essentially bounded function $f \colon [0, T] \to \mathbb{R}^n$.
    In the right-hand side of equality~\cref{x_f}, the second term is the Riemann--Liouville fractional integral of the order $\alpha$ of the function $f(\cdot)$ (see, e.g.,~\cite[Definition~2.1]{Samko_Kilbas_Marichev_1993}) and $\Gamma$ is the gamma-function.
    Noting that every function $x(\cdot) \in AC^\alpha$ is continuous (see, e.g.,~\cite[Remark~3.3]{Samko_Kilbas_Marichev_1993}), we consider $AC^\alpha$ as a subspace of the Banach space of all continuous functions $x \colon [0, T] \to \mathbb{R}^n$ endowed with the uniform norm
    \begin{displaymath}
        \|x(\cdot)\|_\infty
        \triangleq \max_{\tau \in [0, T]} \|x(\tau)\|.
    \end{displaymath}

    According to, e.g.,~\cite[Theorem~2.4]{Samko_Kilbas_Marichev_1993}, every function $x(\cdot) \in AC^\alpha$ has at almost every (a.e.) $\tau \in [0, T]$ a Caputo fractional derivative of the order $\alpha$, which is defined by (see, e.g.,~\cite[Section~2.4]{Kilbas_Srivastava_Trujillo_2006} and~\cite[Chapter~3]{Diethelm_2010})
    \begin{equation} \label{Caputo}
        (^C D^\alpha x)(\tau)
        = \frac{1}{\Gamma(1 - \alpha)} \frac{\rd}{\rd \tau} \int_{0}^{\tau} \frac{x(\xi) - x(0)}{(\tau - \xi)^\alpha} \, \rd \xi.
    \end{equation}
    Moreover, if representation~\cref{x_f} is valid for some measurable and essentially bounded function $f(\cdot)$, then $(^C D^\alpha x)(\tau) = f(\tau)$ for a.e. $\tau \in [0, T]$.
    In particular, we have
    \begin{equation} \label{I^alpha_D^alpha}
        x(\tau)
        = x(0) + \frac{1}{\Gamma(\alpha)} \int_{0}^{\tau} \frac{(^C D^\alpha x)(\xi)}{(\tau - \xi)^{1 - \alpha}} \, \rd \xi
        \quad \forall \tau \in [0, T].
    \end{equation}

    Further, for every $(t, x(\cdot)) \in [0, T] \times AC^\alpha$, put
    \begin{equation} \label{Y}
        Y(t, x(\cdot))
        \triangleq \bigl\{ y(\cdot) \in AC^\alpha \colon
        \, y(\tau) = x(\tau)
        \quad \forall \tau \in [0, t] \bigr\}.
    \end{equation}

    A functional $\varphi \colon [0, T] \times AC^\alpha \to \mathbb{R}$ is called non-anticipative if the equality $\varphi(t, x(\cdot)) = \varphi(t, y(\cdot))$ holds for any $(t, x(\cdot)) \in [0, T) \times AC^\alpha$ and any $y(\cdot) \in Y(t, x(\cdot))$.

    A functional $\varphi \colon [0, T] \times AC^\alpha \to \mathbb{R}$ is said to be $ci$-differentiable of the order $\alpha$ at a point $(t, x(\cdot)) \in [0, T) \times AC^\alpha$ (see, e.g.,~\cite{Kim_1999,Gomoyunov_2020_SIAM} and also~\cite{Gomoyunov_Lukoyanov_Plaksin_2021}) if there exist $\partial_t^\alpha \varphi (t, x(\cdot)) \in \mathbb{R}$ and $\nabla^\alpha \varphi (t, x(\cdot)) \in \mathbb{R}^n$ such that, for any $y(\cdot) \in Y(t, x(\cdot))$ and any $\tau \in (t, T)$, the relation below takes place:
    \begin{align*}
        & \varphi(\tau, y(\cdot)) - \varphi(t, x(\cdot)) \\
        & \quad = \partial_t^\alpha \varphi(t, x(\cdot)) (\tau - t)
        + \int_{t}^{\tau} \langle \nabla^\alpha \varphi(t, x(\cdot)), (^C D^\alpha y)(\xi) \rangle \, \rd \xi
        + o(\tau - t),
    \end{align*}
    where the function $o \colon (0, \infty) \to \mathbb{R}$, which may depend on $t$ and $y(\cdot)$, satisfies the condition $o(\delta) / \delta \to 0$ as $\delta \to 0^+$.
    In this case, the values $\partial_t^\alpha \varphi (t, x(\cdot))$ and $\nabla^\alpha \varphi(t, x(\cdot))$ are called $ci$-derivatives of the order $\alpha$ of the functional $\varphi$ at the point $(t, x(\cdot))$.

    Observe that, if a functional $\varphi \colon [0, T] \times AC^\alpha \to \mathbb{R}$ is $ci$-differentiable of the order $\alpha$ at every point $(t, x(\cdot)) \in [0, T) \times AC^\alpha$, then the functional $\varphi$ itself and the mappings
    \begin{equation} \label{ci_derivatives}
        \partial_t^\alpha \varphi \colon [0, T) \times AC^\alpha \to \mathbb{R},
        \quad \nabla^\alpha \varphi \colon [0, T) \times AC^\alpha \to \mathbb{R}^n
    \end{equation}
    are automatically non-anticipative.

    Finally, a functional $\varphi \colon [0, T] \times AC^\alpha \to \mathbb{R}$ is said to be $ci$-smooth of the order $\alpha$ if it is continuous, $ci$-differentiable of the order $\alpha$ at every point $(t, x(\cdot)) \in [0, T) \times AC^\alpha$, and mappings~\cref{ci_derivatives} are continuous.

    \begin{remark}
        In this paper, we consider the space $[0, T] \times AC^\alpha$, endowed with the standard product metric, and require that all mappings defined on this space are non-anticipative.
        Note that this framework differs from previous studies~\cite{Gomoyunov_2020_SIAM,Gomoyunov_2020_IMM_Eng,Gomoyunov_2020_DE,Gomoyunov_2021_IMM_Eng,Gomoyunov_2021_Mathematics,Gomoyunov_2021_ESAIM,Gomoyunov_Lukoyanov_2021}, which deal with a certain metric space consisting of all pairs $(t, w(\cdot))$ where $t \in [0, T]$ and $w(\cdot)$ is a restriction of a function $x(\cdot) \in AC^\alpha$ to $[0, t]$.
        Nevertheless, as shown in~\cite[Subsection~5.1]{Gomoyunov_Lukoyanov_Plaksin_2021}, these two approaches are closely related, which allows us to use the previously obtained results in this paper, after their corresponding reformulation.
    \end{remark}

\section{Differential game and HJBI equation}
\label{section_Differential_Game_and_HJBI_Equation}

    In this section, we first describe shortly a differential game under consideration and formulate the basic assumptions in~\cref{subsection_Differential_Game}.
    Then, we associate the differential game to the Cauchy problem for the path-dependent HJBI equation with $ci$-derivatives of the order $\alpha$ and the corresponding right-end boundary condition in~\cref{subsection_HJBI_Equation}.
    After that, in~\cref{subsection_Properties_of_rho}, we recall a notion of a minimax solution of the Cauchy problem and present the properties of the value functional of the differential game that are needed for the main results of the paper.

\subsection{Differential game}
\label{subsection_Differential_Game}

    This paper deals with the following differential game.
    Given an initial data $(t, x(\cdot)) \in [0, T] \times AC^\alpha$, a path $y(\cdot) \in Y(t, x(\cdot))$ (see~\cref{Y}) of the dynamical system is described by the fractional differential equation
    \begin{subequations} \label{differential_game}
        \begin{equation} \label{system}
        (^C D^\alpha y)(\tau)
        = f(\tau, y(\tau), u(\tau), v(\tau))
        \text{ for a.e. } \tau \in [t, T].
        \end{equation}
        Here, $\tau$ is time, $y(\tau) \in \mathbb{R}^n$ is a current state, $(^C D^\alpha y)(\tau)$ is the Caputo fractional derivative of the order $\alpha$ (see~\cref{Caputo}), $u(\tau) \in P$ and $v(\tau) \in Q$ are current controls of the first and second players, respectively, where $P \subset \mathbb{R}^{n_P}$ and $Q \subset \mathbb{R}^{n_Q}$ are compact sets and $n_P$, $n_Q \in \mathbb{N}$.
        The first player tries to minimize while the second player tries to maximize the Bolza cost functional
        \begin{equation} \label{cost_functional}
            J
            \triangleq \sigma(y(\cdot)) + \int_{t}^{T} \chi(\tau, y(\tau), u(\tau), v(\tau)) \, \rd \tau.
        \end{equation}
    \end{subequations}

    The mappings $f \colon [0, T] \times \mathbb{R}^n \times P \times Q \to \mathbb{R}^n$, $\chi \colon [0, T] \times \mathbb{R}^n \times P \times Q \to \mathbb{R}$, and $\sigma \colon AC^\alpha \to \mathbb{R}$ are assumed to satisfy the conditions below:
    \begin{itemize}
        \item[(i)]
            The functions $f$ and $\chi$ are continuous.

        \item[(ii)]
            For any $R > 0$, there exists $\lambda_\ast > 0$ such that
            \begin{displaymath}
                \|f(t, x, u, v) - f(t, y, u, v)\|
                + |\chi(t, x, u, v) - \chi(t, y, u, v)|
                \leq \lambda_\ast \|x - y\|
            \end{displaymath}
            for any $t \in [0, T]$, any $x$, $y \in B_R$, any $u \in P$, and any $v \in Q$.

        \item[(iii)]
            There is a number $c_\ast > 0$ such that
            \begin{displaymath}
                \|f(t, x, u, v)\|
                \leq c_\ast (1 + \|x\|)
            \end{displaymath}
            for any $t \in [0, T]$, any $x \in \mathbb{R}^n$, any $u \in P$, and any $v \in Q$.

        \item[(iv)]
            For every $t \in [0, T]$ and every $x$, $s \in \mathbb{R}^n$, the following equality is valid:
            \begin{align*}
                & \min_{u \in P} \max_{v \in Q} \bigl( \langle s, f(t, x, u, v) \rangle + \chi(t, x, u, v) \bigr) \\
                & \quad = \max_{v \in Q} \min_{u \in P} \bigl( \langle s, f(t, x, u, v) \rangle + \chi(t, x, u, v) \bigr).
            \end{align*}

        \item[(v)]
            For any compact set $X \subset AC^\alpha$, there exists $\lambda^\ast > 0$ such that
            \begin{displaymath}
                |\sigma(x(\cdot)) - \sigma(y(\cdot))|
                \leq \lambda^\ast \Biggl( \|x(T) - y(T)\| + \int_{0}^{T} \|x(\tau) - y(\tau)\| \, \rd \tau \Biggr)
            \end{displaymath}
            for any $x(\cdot)$, $y(\cdot) \in X$.
    \end{itemize}
    Note that condition (v) implies continuity of the functional $\sigma$.

    For a complete statement of differential game~\cref{differential_game}, the reader is referred to~\cite[Sections~3 and~4]{Gomoyunov_2021_Mathematics}.
    We omit the details, since, for the purposes of this paper, we need only the properties of the value functional $[0, T] \times AC^\alpha \ni (t, x(\cdot)) \mapsto \rho(t, x(\cdot)) \in \mathbb{R}$ of differential game~\cref{differential_game} that have already been established in~\cite{Gomoyunov_2021_Mathematics,Gomoyunov_Lukoyanov_2021} and are presented below in~\cref{subsection_Properties_of_rho}.

\subsection{Path-dependent HJBI equation}
\label{subsection_HJBI_Equation}

    In accordance with~\cite{Gomoyunov_2021_Mathematics,Gomoyunov_Lukoyanov_2021}, we associate differential game~\cref{differential_game} to the Hamiltonian
    \begin{equation} \label{Hamiltonian}
        H(t, x, s)
        \triangleq \min_{u \in P} \max_{v \in Q} \bigl( \langle s, f(t, x, u, v) \rangle + \chi(t, x, u, v) \bigr),
        \quad t \in [0, T], \quad x, s \in \mathbb{R}^n,
    \end{equation}
    and the Cauchy problem for the path-dependent HJBI equation with fractional $ci$-derivatives of the order $\alpha$
    \begin{subequations} \label{Cauchy_problem}
        \begin{equation} \label{HJBI}
            \partial_t^\alpha \varphi(t, x(\cdot)) + H \bigl( t, x(t), \nabla^\alpha \varphi(t, x(\cdot)) \bigr)
            = 0
            \quad \forall (t, x(\cdot)) \in [0, T) \times AC^\alpha
        \end{equation}
        and the right-end boundary condition
        \begin{equation} \label{boundary_condition}
            \varphi(T, x(\cdot))
            = \sigma(x(\cdot))
            \quad \forall x(\cdot) \in AC^\alpha.
        \end{equation}
    \end{subequations}
    The unknown in problem~\cref{Cauchy_problem} is a non-anticipative functional $\varphi \colon [0, T] \times AC^\alpha \to \mathbb{R}$.

    Let us observe that assumptions (i)--(iii) imply the following properties of the Hamiltonian $H \colon [0, T] \times \mathbb{R}^n \times \mathbb{R}^n \to \mathbb{R}$ given by~\cref{Hamiltonian}:
    \begin{itemize}
        \item[(j)]
             The function $H$ is continuous.

        \item[(jj)]
            For every $t \in [0, T]$ and every $x$, $s$, $r \in \mathbb{R}^n$, the inequality below holds:
            \begin{displaymath}
                |H(t, x, s) - H(t, x, r)|
                \leq c_\ast (1 + \|x\|) \|s - r\|,
            \end{displaymath}
            where the number $c_\ast$ is taken from condition (iii).

        \item[(jjj)]
            For any $R > 0$, there exists $\lambda_\ast > 0$ such that
            \begin{displaymath}
                |H(t, x, s) - H(t, y, s)|
                \leq \lambda_\ast (1 + \|s\|) \|x - y\|
            \end{displaymath}
            for any $t \in [0, T]$, any $x$, $y \in B_R$, and any $s \in \mathbb{R}^n$.
        \end{itemize}

\subsection{Properties of value functional}
\label{subsection_Properties_of_rho}

    Given $(t, x(\cdot)) \in [0, T] \times AC^\alpha$, denote
    \begin{align}
        & Y_\ast(t, x(\cdot))
        \triangleq \Big\{ y(\cdot) \in Y(t, x(\cdot)) \colon \label{Y_ast} \\
        & \hphantom{Y_\ast(t, x(\cdot)) \triangleq \Big\{} \quad
        \|(^C D^\alpha y)(\tau)\| \leq c_\ast (1 + \|y(\tau)\|)
        \text{ for a.e. } \tau \in [t, T] \Bigr\}, \nonumber
    \end{align}
    where the set $Y(t, x(\cdot))$ is defined by~\cref{Y} and $c_\ast$ is the number from property (jj).

    According to~\cite[Subsection~3.3 and Proposition~4.3]{Gomoyunov_2021_ESAIM}, we have
    \begin{definition} \label{definition_minimax}
        A functional $\varphi \colon [0, T] \times AC^\alpha \to \mathbb{R}$ is called a minimax solution of Cauchy problem~\cref{Cauchy_problem} if it is non-anticipative and continuous, satisfies boundary condition~\cref{boundary_condition}, and possesses the following two properties:
        \begin{itemize}
            \item[$(\rm M_+)$]
                For any $(t, x(\cdot)) \in [0, T) \times AC^\alpha$ and any $s \in \mathbb{R}^n$, there exists $y(\cdot) \in Y_\ast(t, x(\cdot))$ such that
                \begin{displaymath}
                    \varphi(\tau, y(\cdot))
                    - \int_{t}^{\tau} \bigl( \langle s, (^C D^\alpha y)(\xi) \rangle - H(\xi, y(\xi), s) \bigr) \, \rd \xi
                    \leq \varphi(t, x(\cdot))
                    \quad \forall \tau \in [t, T].
                \end{displaymath}

            \item[$(\rm M_-)$]
                For any $(t, x(\cdot)) \in [0, T) \times AC^\alpha$ and any $s \in \mathbb{R}^n$, there exists $y(\cdot) \in Y_\ast(t, x(\cdot))$ such that
                \begin{displaymath}
                    \varphi(\tau, y(\cdot))
                    - \int_{t}^{\tau} \bigl( \langle s, (^C D^\alpha y)(\xi) \rangle - H(\xi, y(\xi), s) \bigr) \, \rd \xi
                    \geq \varphi(t, x(\cdot))
                    \quad \forall \tau \in [t, T].
                \end{displaymath}
        \end{itemize}
    \end{definition}

    By virtue of~\cite[Theorem~2 and Remark~1]{Gomoyunov_2021_Mathematics}, we obtain
    \begin{proposition} \label{proposition_the_value_is_the_minimax}
        Let assumptions {\rm (i)--(iv)} hold and let the functional $\sigma$ be continuous.
        Then, the value functional $\rho$ of differential game~\cref{differential_game} is a unique minimax solution of Cauchy problem~\cref{Cauchy_problem} where the Hamiltonian $H$ is given by~\cref{Hamiltonian}.
    \end{proposition}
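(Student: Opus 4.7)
The plan is to deduce the statement directly from \cite[Theorem~2]{Gomoyunov_2021_Mathematics} (together with \cite[Remark~1]{Gomoyunov_2021_Mathematics}) after translating between the restriction-pair formulation used in that reference and the $[0,T]\times AC^\alpha$ formulation of this paper. The real work lies not in new analysis but in verifying that the translation is faithful and that the hypotheses of the cited theorem are in force.

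First, I would set up the correspondence. Because the dynamics in \cref{system} and the Bolza cost in \cref{cost_functional} depend on the initial data $(t, x(\cdot))$ only through the restriction $x\vert_{[0,t]}$, so does the value functional $\rho$; this gives the non-anticipativity required by \cref{definition_minimax}. As recalled in the remark in \cref{section_Preliminaries} and detailed in \cite[Subsection~5.1]{Gomoyunov_Lukoyanov_Plaksin_2021}, non-anticipative functionals on $[0,T]\times AC^\alpha$ are in natural bijection with functionals on the restriction-pair space, and the bijection preserves continuity.

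Second, I would match the minimax conditions across the two settings. Under the bijection, the boundary condition \cref{boundary_condition} becomes the one in the reference, and the trajectory set $Y_\ast(t, x(\cdot))$ from \cref{Y_ast}, defined with the constant $c_\ast$ of property (jj), corresponds to its analogue there, with Caputo derivatives $(^C D^\alpha y)(\cdot)$ matched under extension and restriction. One then needs to check that the integral inequalities in $({\rm M}_+)$ and $({\rm M}_-)$ translate verbatim into the inequalities in the reference's notion of a minimax solution, and conversely that trajectories satisfying those inequalities lift to global $AC^\alpha$ paths satisfying $({\rm M}_\pm)$. This is pure bookkeeping, but must be done carefully for all $\tau\in[t,T]$ and for each choice of $s\in\mathbb{R}^n$.

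Third, with existence and uniqueness already established in \cite[Theorem~2]{Gomoyunov_2021_Mathematics} under assumptions (i)--(iv) and continuity of $\sigma$, which are exactly our hypotheses, I would pull the conclusion back through the bijection to obtain that $\rho$ is the unique minimax solution of \cref{Cauchy_problem} with Hamiltonian \cref{Hamiltonian}. The only conceptual obstacle I anticipate is the quantifier alignment in $({\rm M}_\pm)$: the trajectory $y(\cdot)$ must be produced simultaneously for the chosen $s$ and satisfy the inequality on the entire interval $[t,T]$, and one must confirm that the extension/restriction procedure does not weaken this simultaneity or enlarge $Y_\ast$. Once verified, no further analytical argument is needed.
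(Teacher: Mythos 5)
Your proposal matches the paper's treatment: the paper gives no separate proof of this proposition but obtains it directly from \cite[Theorem~2 and Remark~1]{Gomoyunov_2021_Mathematics}, relying on the correspondence between the restriction-pair framework and the $[0,T]\times AC^\alpha$ framework described in the remark of \cref{section_Preliminaries} (via \cite[Subsection~5.1]{Gomoyunov_Lukoyanov_Plaksin_2021}), exactly as you outline. Your additional bookkeeping about matching $Y_\ast$, the quantifiers in $({\rm M}_\pm)$, and the boundary condition is the content the paper leaves implicit, and it is carried out correctly.
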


    Further, for every $(t, x(\cdot)) \in [0, T] \times AC^\alpha$, we introduce the function
    \begin{equation} \label{a}
        a(\tau \mid t, x(\cdot))
        \triangleq
        \begin{cases}
            x(\tau),
            & \mbox{if } \tau \in [0, t], \\
            \displaystyle x(0) + \frac{1}{\Gamma(\alpha)} \int_{0}^{t} \frac{(^C D^\alpha x)(\xi)}{(\tau - \xi)^{1 - \alpha}} \, \rd \xi,
            & \mbox{if } \tau \in (t, T].
        \end{cases}
    \end{equation}
    Note that the function $a(\cdot \mid t, x(\cdot))$ can be defined as a unique function $a(\cdot) \in Y(t, x(\cdot))$ such that $(^C D^\alpha a)(\tau) = 0$ for a.e. $\tau \in [t, T]$.
    Hence, we derive
    \begin{equation} \label{semigroup_property}
        a(\cdot \mid t, x(\cdot))
        = a \bigl( \cdot \bigm| \tau, a(\cdot \mid t, x(\cdot)) \bigl)
        \quad \forall \tau \in [0, T].
    \end{equation}

    In addition, observe that, due to~\cite[Lemma~3]{Gomoyunov_2020_DE}, the mapping
    \begin{equation} \label{a_mapping}
        [0, T] \times AC^\alpha \ni (t, x(\cdot)) \mapsto a(\cdot \mid t, x(\cdot)) \in AC^\alpha
    \end{equation}
    is continuous, and, moreover, it holds that (see, e.g.,~\cite[the end of Section~6]{Gomoyunov_Lukoyanov_2021})
    \begin{equation} \label{a_Lipschitz}
        \|a(\cdot \mid t, x(\cdot))\|_\infty
        \leq \max_{\tau \in [0, t]} \|x(\tau)\|
        \quad \forall (t, x(\cdot)) \in [0, T] \times AC^\alpha.
    \end{equation}
    In particular, mapping~\cref{a_mapping} is non-anticipative, which means that
    \begin{equation} \label{a_non-anticipative}
        a(\cdot \mid t, y(\cdot))
        = a(\cdot \mid t, x(\cdot))
    \end{equation}
    for any $(t, x(\cdot)) \in [0, T) \times AC^\alpha$ and any $y(\cdot) \in Y(t, x(\cdot))$.

    Now, for a functional $\varphi \colon [0, T] \times AC^\alpha \to \mathbb{R}$, let us consider the following local Lipschitz continuity condition with respect to the second variable:
    \begin{itemize}
        \item[(L)]
            For any compact set $X \subset AC^\alpha$, there exists $\Lambda > 0$ such that
            \begin{align*}
                & |\varphi(t, x(\cdot)) - \varphi(t, y(\cdot))|
                \leq \Lambda \biggl( \|a(T \mid t, x(\cdot)) - a(T \mid t, y(\cdot))\| \\
                & \hphantom{|\varphi(t, x(\cdot)) - \varphi(t, y(\cdot))|}
                \quad + \int_{0}^{T} \frac{\|a(\tau \mid t, x(\cdot)) - a(\tau \mid t, y(\cdot))\|}{(T - \tau)^{1 - \alpha}} \, \rd \tau \biggr)
            \end{align*}
            for any $t \in [0, T]$ and any $x(\cdot)$, $y(\cdot) \in X$.
    \end{itemize}
    Owing to~\cref{a_non-anticipative}, condition (L) implies non-anticipativity of the functional $\varphi$.

    \begin{proposition} \label{proposition_the_value_is_Lipschitz}
        Under assumptions {\rm (i)--(v)}, the value functional $\rho$ of differential game~\cref{differential_game} satisfies local Lipschitz continuity condition $(\rm L)$.
    \end{proposition}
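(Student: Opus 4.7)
The plan is to reduce the proposition to the Lipschitz estimate already established in~\cite[Lemma~1]{Gomoyunov_Lukoyanov_2021}, which furnishes exactly bound~$(\rm L)$ for the value functional of the same differential game, modulo the reformulation of the state space discussed in the Remark. Passing from the ``truncated-path'' setting of~\cite{Gomoyunov_Lukoyanov_2021} to the present framework on $[0, T] \times AC^\alpha$ is transparent because, by~\cref{a_non-anticipative}, the right-hand side of~$(\rm L)$ depends on $x(\cdot)$ only through $a(\cdot \mid t, x(\cdot))$ and hence respects non-anticipativity, so the bound from the prior work can be transported verbatim.

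To outline the underlying argument: fix a compact set $X \subset AC^\alpha$ and $t \in [0, T]$. By~\cref{I^alpha_D^alpha}, any trajectory $y(\cdot)$ with history $x(\cdot) \in X$ driven by controls $u(\cdot)$, $v(\cdot)$ decomposes on $[t, T]$ as
\begin{equation*}
    y(\tau)
    = a(\tau \mid t, x(\cdot)) + \frac{1}{\Gamma(\alpha)} \int_{t}^{\tau} \frac{f(\xi, y(\xi), u(\xi), v(\xi))}{(\tau - \xi)^{1 - \alpha}} \, \rd \xi,
\end{equation*}
so the entire dependence on the past is captured by $a(\cdot \mid t, x(\cdot))$. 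Combining assumption~(iii) with~\cref{a_Lipschitz} and a standard fractional Gronwall argument confines every such trajectory to some common ball $B_R$, uniformly over $x(\cdot) \in X$ and over the controls.

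Next, for $x(\cdot)$, $y(\cdot) \in X$ and a common realization of the controls, subtract the integral representations of the two resulting trajectories $y_x$, $y_y$ and invoke the local Lipschitz bound~(ii) on $B_R$ to arrive at the Volterra inequality
\begin{equation*}
    \|y_x(\tau) - y_y(\tau)\|
    \leq \|a(\tau \mid t, x(\cdot)) - a(\tau \mid t, y(\cdot))\|
    + \frac{\lambda_\ast}{\Gamma(\alpha)} \int_{t}^{\tau} \frac{\|y_x(\xi) - y_y(\xi)\|}{(\tau - \xi)^{1 - \alpha}} \, \rd \xi.
\end{equation*}
A fractional Gronwall lemma of Mittag--Leffler type then yields a pointwise estimate of $\|y_x(\tau) - y_y(\tau)\|$ in terms of the $a$-differences. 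Substituting this into the Lipschitz bounds~(v) for $\sigma$ and~(ii) for $\chi$, and passing to the saddle value over non-anticipative strategies (whose existence is guaranteed by Isaacs' condition~(iv)), delivers the required inequality for $|\rho(t, x(\cdot)) - \rho(t, y(\cdot))|$.

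The main obstacle is producing precisely the form of~$(\rm L)$, in which the inner integral is weighted by the kernel $(T - \tau)^{\alpha - 1}$ rather than by the $(\tau - \xi)^{\alpha - 1}$ kernel native to the Gronwall estimate. The conversion rests on a Fubini swap together with the Beta-function identity $\int_{\xi}^{T} (T - \tau)^{\alpha - 1} (\tau - \xi)^{\alpha - 1} \, \rd \tau = B(\alpha, \alpha) (T - \xi)^{2\alpha - 1}$, and the careful bookkeeping of constants through this step is precisely what~\cite[Lemma~1]{Gomoyunov_Lukoyanov_2021} packages; for this reason the proof of the present proposition amounts essentially to citing that lemma after the short verification above that its hypotheses match assumptions~(i)--(v).
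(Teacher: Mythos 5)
Your proposal is correct and takes essentially the same route as the paper, which likewise disposes of the proposition in one line by deferring to \cite[Lemma~1]{Gomoyunov_Lukoyanov_2021} adapted to the cost functional with the additional running-cost term. Your sketch of the underlying mechanics (the decomposition through $a(\cdot \mid t, x(\cdot))$, the fractional Gronwall estimate, and the absorption of the resulting kernels into the $(T - \tau)^{\alpha - 1}$-weighted integral of condition~(L)) is a faithful account of what that cited lemma packages, including the handling of $\chi$ via assumption~(ii) that constitutes the required adaptation.
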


    This proposition can be proved by adapting the arguments from~\cite[Lemma~1]{Gomoyunov_Lukoyanov_2021} to the case of cost functional~\cref{cost_functional}, containing the additional integral term.

    \begin{remark} \label{remark_limiting_case_Lipschitz}
        In the limiting case $\alpha = 1$, the space $AC^1$ consists of all Lipschitz continuous functions $x \colon [0, T] \to \mathbb{R}^n$, and, for any $(t, x(\cdot)) \in [0, T] \times AC^1$, we have
        \begin{equation} \label{a_limiting_case}
            a(\tau \mid t, x(\cdot))
            =
            \begin{cases}
                x(\tau), & \mbox{if } \tau \in [0, t], \\[0.2em]
                x(t), & \mbox{if } \tau \in (t, T].
            \end{cases}
        \end{equation}
        Consequently, the inequality in condition (L) becomes as follows:
        \begin{displaymath}
            |\varphi(t, x(\cdot)) - \varphi(t, y(\cdot))|
            \leq \Lambda \biggl( (1 + T - t) \|x(t) - y(t)\| + \int_{0}^{t} \|x(\tau) - y(\tau)\| \, \rd \tau \biggr),
        \end{displaymath}
        which agrees with Lipschitz continuity conditions used in context of differential games for time-delay systems and the associated HJBI equations in, e.g.,~\cite{Lukoyanov_2010_IMM_Eng_2,Plaksin_2021}.
    \end{remark}

\section{Viscosity solutions}
\label{section_Viscosity_Solutions}

    For every $k \in \mathbb{N}$, put
    \begin{align*}
        & X_k
        \triangleq \Bigl\{ x(\cdot) \in AC^\alpha \colon
        \, \|x(0)\|
        \leq k, \\
        & \hphantom{X_k \triangleq \Bigl\{ x(\cdot) \in AC^\alpha \colon \,} \|(^C D^\alpha x)(\tau)\|
        \leq k c_\ast (1 + \|x(\tau)\|) \text{ for a.e. } \tau \in [0, T] \Bigr\},
    \end{align*}
    where the number $c_\ast$ is taken from property (jj) (respectively, from condition (iii)).
    According to, e.g.,~\cite[Theorem~2]{Gomoyunov_2020_DE}, the set $X_k$ is compact in $AC^\alpha$.
    Note that, for any $(t, x(\cdot)) \in [0, T] \times X_k$ and any $y(\cdot) \in Y_\ast(t, x(\cdot))$ (see~\cref{Y_ast}), the inclusion
    \begin{equation} \label{continuation_is_in_X_k}
        y(\cdot) \in X_k
    \end{equation}
    is valid, and, in addition, the equality
    \begin{equation} \label{X_k_union}
        AC^\alpha
        = \bigcup_{k \in \mathbb{N}} X_k
    \end{equation}
    takes place.

    \begin{definition} \label{definition_viscosity}
        A functional $\varphi \colon [0, T] \times AC^\alpha \to \mathbb{R}$ is called a viscosity solution of Cauchy problem~\cref{Cauchy_problem} if it is continuous, satisfies local Lipschitz continuity condition {\rm (L)}, meets boundary condition~\cref{boundary_condition}, and has the following two properties:
        \begin{itemize}
            \item[$(\rm V_+)$]
                For any $ci$-smooth of the order $\alpha$ test functional $\psi \colon [0, T] \times AC^\alpha \to \mathbb{R}$ and any $k \in \mathbb{N}$, if the difference $\varphi - \psi$ attains its minimum on the set $[0, T] \times X_k$ at some point $(t, x(\cdot)) \in [0, T) \times X_k$, then
                \begin{displaymath}
                    \partial_t^\alpha \psi(t, x(\cdot))
                    + H \bigl( t, x(t), \nabla^\alpha \psi(t, x(\cdot)) \bigr)
                    \leq 0.
                \end{displaymath}
            \item[$(\rm V_-)$]
                For any $ci$-smooth of the order $\alpha$ test functional $\psi \colon [0, T] \times AC^\alpha \to \mathbb{R}$ and any $k \in \mathbb{N}$, if the difference $\varphi - \psi$ attains its maximum on the set $[0, T] \times X_k$ at some point $(t, x(\cdot)) \in [0, T) \times X_k$, then
                 \begin{displaymath}
                    \partial_t^\alpha \psi(t, x(\cdot))
                    + H \bigl( t, x(t), \nabla^\alpha \psi(t, x(\cdot)) \bigr)
                    \geq 0.
                 \end{displaymath}
        \end{itemize}
    \end{definition}

    Our first result is
    \begin{theorem} \label{theorem_existence}
        Under assumptions $(\rm i)$--$(\rm v)$, the value functional $\rho$ of differential game~\cref{differential_game} is a viscosity solution of Cauchy problem~\cref{Cauchy_problem} where the Hamiltonian $H$ is given by~\cref{Hamiltonian}.
    \end{theorem}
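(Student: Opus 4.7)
The plan is to reduce everything to the already-established properties of $\rho$. By \cref{proposition_the_value_is_the_minimax}, the value functional $\rho$ is a minimax solution of Cauchy problem~\cref{Cauchy_problem}, which gives continuity, non-anticipativity, boundary condition~\cref{boundary_condition}, and the two minimax properties $(\rm M_+)$ and $(\rm M_-)$. By \cref{proposition_the_value_is_Lipschitz}, the value functional also satisfies local Lipschitz continuity condition~(L). Hence every ingredient of \cref{definition_viscosity} is already in hand except the infinitesimal inequalities $(\rm V_+)$ and $(\rm V_-)$, which I plan to derive from $(\rm M_+)$ and $(\rm M_-)$ along the lines of Theorem~1 of~\cite{Lukoyanov_2007_IMM_Eng}, adapted to the fractional $ci$-derivative setting.

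To obtain $(\rm V_+)$, I fix a $ci$-smooth test functional $\psi$, an integer $k \in \mathbb{N}$, and a point $(t, x(\cdot)) \in [0, T) \times X_k$ at which $\rho - \psi$ attains its minimum on $[0, T] \times X_k$. The key move is to set $s := \nabla^\alpha \psi(t, x(\cdot))$ and to apply $(\rm M_+)$ with this particular $s$, obtaining a path $y(\cdot) \in Y_\ast(t, x(\cdot))$ realizing the minimax inequality. By~\cref{continuation_is_in_X_k}, this $y(\cdot)$ lies in $X_k$, so $(\tau, y(\cdot)) \in [0, T] \times X_k$ for every $\tau \in [t, T]$ and the minimum property yields $\rho(\tau, y(\cdot)) - \rho(t, x(\cdot)) \geq \psi(\tau, y(\cdot)) - \psi(t, x(\cdot))$. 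Combining this with the $(\rm M_+)$ inequality, then substituting the $ci$-differentiability expansion of $\psi$ at $(t, x(\cdot))$ (with the deliberate choice of $s$ causing the $\int_t^\tau \langle s, (^C D^\alpha y)(\xi) \rangle \, \rd \xi$ terms to cancel), dividing by $\tau - t > 0$, and letting $\tau \to t^+$ (using continuity of $H$ together with $y(t) = x(t)$), produces exactly $\partial_t^\alpha \psi(t, x(\cdot)) + H(t, x(t), s) \leq 0$.

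The proof of $(\rm V_-)$ is entirely symmetric: start from a maximum point of $\rho - \psi$ on $[0, T] \times X_k$, invoke $(\rm M_-)$ with the same choice $s = \nabla^\alpha \psi(t, x(\cdot))$, and run the same chain of estimates with the inequalities reversed. I do not anticipate a real obstacle here: the argument is essentially mechanical once one has $(\rm M_\pm)$ and the invariance~\cref{continuation_is_in_X_k}, which is precisely what the fractional $Y_\ast$-construction and the definition of the compact sets $X_k$ were designed to supply. Condition~(L) plays no role in this direction (it is needed only for uniqueness in~\cref{section_Uniqueness_Theorem}); the compact sets $X_k$ in \cref{definition_viscosity} match the class of admissible minimax continuations exactly, so the test functional never sees paths that $(\rm M_\pm)$ cannot produce.
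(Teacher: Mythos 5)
Your proposal is correct and follows essentially the same route as the paper: the paper likewise combines \cref{proposition_the_value_is_the_minimax} (for continuity, the boundary condition, and $(\rm M_\pm)$) with \cref{proposition_the_value_is_Lipschitz} (for condition (L)), and derives $(\rm V_\pm)$ from $(\rm M_\pm)$ via the invariance relation~\cref{continuation_is_in_X_k} by citing the arguments of Theorem~1 of~\cite{Lukoyanov_2007_IMM_Eng}, which are exactly the cancellation-and-limit computation with $s = \nabla^\alpha \psi(t, x(\cdot))$ that you spell out. The only difference is that you make explicit what the paper delegates to the reference.
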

    \begin{proof}
        By virtue of~\cref{definition_minimax,proposition_the_value_is_the_minimax}, the functional $\rho$ is non-anticipative and continuous, meets boundary condition~\cref{boundary_condition}, and possesses properties $(\rm M_+)$ and $(\rm M_-)$.
        Taking relation~\cref{continuation_is_in_X_k} into account and repeating the arguments from~\cite[Theorem~1]{Lukoyanov_2007_IMM_Eng}, we derive that properties $(\rm M_+)$ and $(\rm M_-)$ imply respectively properties $(\rm V_+)$ and $(\rm V_-)$.
        Thus, it remains to note that the functional $\rho$ satisfies condition $(\rm L)$ due to~\cref{proposition_the_value_is_Lipschitz}.
        The theorem is proved.
    \end{proof}

\section{Uniqueness of viscosity solutions}
\label{section_Uniqueness_Theorem}

    In this section, we prove
    \begin{theorem} \label{theorem_uniqueness}
        Let a Hamiltonian $H \colon [0, T] \times \mathbb{R}^n \times \mathbb{R}^n \to \mathbb{R}$ possess properties {\rm (j)} and {\rm (jj)}.
        Then, Cauchy problem~\cref{Cauchy_problem} admits at most one viscosity solution.
    \end{theorem}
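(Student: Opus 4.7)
The proof plan is a doubling-of-variables argument in the spirit of Crandall--Lions, adapted to the path-dependent fractional setting and following the scheme of~\cite[Theorem~2]{Lukoyanov_2007_IMM_Eng}. Suppose, for contradiction, that there exist two viscosity solutions $\varphi_1$, $\varphi_2$ of Cauchy problem~\cref{Cauchy_problem} and a point $(t_0, x_0(\cdot)) \in [0, T) \times AC^\alpha$ with $\varphi_1(t_0, x_0(\cdot)) - \varphi_2(t_0, x_0(\cdot)) > 0$. By~\cref{X_k_union} fix $k \in \mathbb{N}$ so that $x_0(\cdot) \in X_k$; on the compact set $[0,T] \times X_k$ both functionals are continuous and, by condition~(L), Lipschitz with a common constant~$\Lambda$.

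The heart of the proof is the construction, for each small $\varepsilon > 0$ and each $\mu > 0$, of an auxiliary penalty functional $\Phi_{\varepsilon, \mu} \colon [0, T] \times X_k \times X_k \to \mathbb{R}$ that is $ci$-smooth of order $\alpha$ separately in each of its path arguments. The natural shape, dictated by condition~(L), is
\[
    \Phi_{\varepsilon, \mu}(t, x(\cdot), y(\cdot))
    = \frac{\bigl\| a(T \mid t, x(\cdot)) - a(T \mid t, y(\cdot)) \bigr\|^2}{\varepsilon}
    + \frac{1}{\varepsilon} \int_0^T \frac{\| a(\tau \mid t, x(\cdot)) - a(\tau \mid t, y(\cdot)) \|^2}{(T - \tau)^{1 - \alpha}} \, \rd \tau
    + \mu (T - t),
\]
possibly augmented by a lower-order localisation term near $(t_0, x_0(\cdot))$. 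Using~\cref{a,semigroup_property,I^alpha_D^alpha} one computes $\partial_t^\alpha \Phi_{\varepsilon, \mu}$ together with the two partial $ci$-gradients $\nabla^\alpha_x \Phi_{\varepsilon, \mu}$ and $\nabla^\alpha_y \Phi_{\varepsilon, \mu}$, and verifies the $ci$-smoothness. Most importantly, combining these expressions with condition~(L) evaluated at the maximisers (in the spirit of~\cite[Lemma~7.6]{Plaksin_2021}) produces an \emph{a~priori} bound on these $ci$-gradients that is uniform in $\varepsilon$, and moreover shows that $\nabla^\alpha_x \Phi_{\varepsilon, \mu} + \nabla^\alpha_y \Phi_{\varepsilon, \mu}$ tends to zero in a suitable sense as $\varepsilon \to 0^+$.

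One then runs the doubling argument. The continuous functional $\Psi_{\varepsilon, \mu} := \varphi_1(t, x(\cdot)) - \varphi_2(t, y(\cdot)) - \Phi_{\varepsilon, \mu}(t, x(\cdot), y(\cdot))$ attains its maximum on $[0, T] \times X_k \times X_k$; for $\mu$ fixed small enough that $\mu(T - t_0) < \varphi_1(t_0, x_0(\cdot)) - \varphi_2(t_0, x_0(\cdot))$ and for all sufficiently small $\varepsilon > 0$, this maximum is strictly positive, attained at some $(t^\varepsilon, x^\varepsilon(\cdot), y^\varepsilon(\cdot))$ with $t^\varepsilon \in [0, T)$ (the case $t^\varepsilon = T$ is excluded via boundary condition~\cref{boundary_condition} combined with~(L)), and $\|x^\varepsilon(\cdot) - y^\varepsilon(\cdot)\|_\infty \to 0$ as $\varepsilon \to 0^+$. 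Freezing $y^\varepsilon(\cdot)$ and applying property $(\rm V_+)$ to $\varphi_1$ with test functional $(t, x(\cdot)) \mapsto \Phi_{\varepsilon, \mu}(t, x(\cdot), y^\varepsilon(\cdot))$, and symmetrically applying $(\rm V_-)$ to $\varphi_2$ with test functional $(t, y(\cdot)) \mapsto -\Phi_{\varepsilon, \mu}(t, x^\varepsilon(\cdot), y(\cdot))$, one obtains two HJBI inequalities. Subtracting them and using property~(jj) to estimate the difference of Hamiltonian terms in terms of $\|\nabla^\alpha_x \Phi_{\varepsilon, \mu} + \nabla^\alpha_y \Phi_{\varepsilon, \mu}\|$, together with continuity~(j) to handle the change in spatial argument via $\|x^\varepsilon(t^\varepsilon) - y^\varepsilon(t^\varepsilon)\| \to 0$, yields $\mu \leq \omega(\varepsilon)$ for some modulus $\omega(\varepsilon) \to 0^+$. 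Letting $\varepsilon \to 0^+$ then produces $\mu \leq 0$, the desired contradiction.

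The main obstacle, as emphasised in the introduction, is precisely this construction of $\Phi_{\varepsilon, \mu}$: the fractional convolution kernel $(\tau - \xi)^{\alpha - 1}/\Gamma(\alpha)$ appearing in~\cref{x_f} couples the past of a path to its $ci$-derivatives, so neither $\partial_t^\alpha \Phi_{\varepsilon, \mu}$ nor $\nabla^\alpha \Phi_{\varepsilon, \mu}$ is given by a simple closed-form expression. Writing these derivatives explicitly, verifying continuity of~\cref{ci_derivatives} for the product functional, and above all extracting the $\varepsilon$-uniform gradient bounds from condition~(L) constitute the novel technical content of the argument.
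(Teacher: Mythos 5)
Your overall skeleton --- doubling of variables, a penalty modelled on the quantity in condition (L), gradient bounds extracted from (L), two HJBI inequalities compared via (j) and (jj) --- is indeed the scheme of the paper's proof. But the two places where you defer the work to ``the novel technical content'' are exactly where the proof lives, and the concrete choices you do write down would fail. The main gap is the penalty functional: you propose the ``natural'' quadratic penalty $\|a(T\mid t,x(\cdot))-a(T\mid t,y(\cdot))\|^2+\int_0^T\|a(\tau\mid t,x(\cdot))-a(\tau\mid t,y(\cdot))\|^2(T-\tau)^{\alpha-1}\,\rd\tau$. Computing its $ci$-gradient of order $\alpha$ introduces an extra singular factor $(\xi-t)^{-(1-\alpha)}$ under the integral (visible in~\cref{lemma_nu_properties_item_c_derivative_x}), so bounding the gradient by the square root of the penalty via Cauchy--Schwarz requires $\int_t^T(\xi-t)^{-2(1-\alpha)}(T-\xi)^{-(1-\alpha)}\,\rd\xi<\infty$, which holds only when $2(1-\alpha)<1$, i.e.\ $\alpha>1/2$. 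This is precisely why the paper replaces the square by the power $q/2$ with $q=2/(2-\alpha)\in(1,2)$, shifts the kernel to $(T-\xi)^{-(1-\alpha-\beta)q}$ with $\beta<\min\{1-\alpha,\alpha/2\}$, and inserts the regularization $\varepsilon^{2/(q-1)}$ inside the power to keep the functional $ci$-smooth (see~\cref{nu} and \cref{lemma_Lipschitz_via_nu,lemma_ci_smoothness,lemma_gradient_properties}). This tuning is the announced principal contribution of the paper; the candidate you actually propose breaks down for $\alpha\le 1/2$.

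The second gap is that you double only the path variables and keep a single time $t$. Then the frozen test functional $(t,x(\cdot))\mapsto\Phi_{\varepsilon,\mu}(t,x(\cdot),y^{\varepsilon}(\cdot))$ depends on $t$ also through $a(\cdot\mid t,y^{\varepsilon}(\cdot))$, whose variation in $t$ is governed by $(^{C}D^{\alpha}y^{\varepsilon})(t)$, a merely measurable and essentially bounded function; such a functional is in general not $ci$-smooth of the order $\alpha$, so properties $(\rm V_+)$ and $(\rm V_-)$ cannot be applied to it. The paper doubles the time variable as well, so that the reference path $a(\cdot\mid\tau_{\varepsilon},y_{\varepsilon}(\cdot))$ is genuinely frozen and $\partial_t^{\alpha}\mu_{\varepsilon}^{(\tau_{\varepsilon},y_{\varepsilon}(\cdot))}\equiv 0$ by~\cref{lemma_nu_properties_item_c_derivative_t}; the price is the additional penalty $(t-\tau)^2/\varepsilon^{3/\alpha}$ and estimate~\cref{difference_of_gradients}, which shows that the sum of the two gradients is $O(|t-\tau|^{\alpha})$ and hence $o(\varepsilon)$ after the time penalization --- in your single-time setting this cancellation would be exact, but only because the construction is not admissible in the first place. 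A smaller slip: at a maximum of $\varphi_1-\psi$ one invokes $(\rm V_-)$, and at a minimum of $\varphi_2-\psi$ one invokes $(\rm V_+)$; you have the two reversed.
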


    Before going into details, we note that~\cref{theorem_existence,theorem_uniqueness} yield the desired characterization of the value functional $\rho$ of differential game~\cref{differential_game}.
    Namely, we have
    \begin{corollary}
        Let assumptions {\rm (i)--(v)} hold.
        Then, the value functional $\rho$ of differential game~\cref{differential_game} is a unique viscosity solution of Cauchy problem~\cref{Cauchy_problem} with the Hamiltonian $H$ given by~\cref{Hamiltonian}.
    \end{corollary}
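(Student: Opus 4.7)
The corollary follows immediately from \cref{theorem_existence,theorem_uniqueness}: the former shows that the value functional $\rho$ is a viscosity solution of Cauchy problem~\cref{Cauchy_problem}, and the latter rules out any other. Since \cref{theorem_uniqueness} is stated but not yet proved in the excerpt, the substantive work is to sketch its proof; the corollary is then a one-line consequence.

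My plan is to adapt the doubling-of-variables scheme of~\cite[Theorem~2]{Lukoyanov_2007_IMM_Eng} to the fractional $ci$-derivative setting. Arguing by contradiction, let $\varphi_1$ and $\varphi_2$ be two viscosity solutions of \cref{Cauchy_problem}, and suppose $\varphi_1(t_0, x_0(\cdot)) > \varphi_2(t_0, x_0(\cdot))$ at some $(t_0, x_0(\cdot)) \in [0, T) \times AC^\alpha$. By~\cref{X_k_union} one can pick $k \in \mathbb{N}$ with $x_0(\cdot) \in X_k$; thanks to~\cref{continuation_is_in_X_k}, the set $X_k$ is strongly invariant under $Y_\ast$, so the entire argument can be confined to the compact product $[0, T] \times X_k \times X_k$, on which the maxima and minima required by $(\rm V_+)$ and $(\rm V_-)$ are attained.

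On this compact set I would introduce
\begin{equation*}
    w_\varepsilon(t, x(\cdot), y(\cdot)) \triangleq \varphi_1(t, x(\cdot)) - \varphi_2(t, y(\cdot)) - \psi_\varepsilon(t, x(\cdot), y(\cdot)) - \mu(T - t),
\end{equation*}
where $\mu > 0$ pushes the maximizing time $t_\varepsilon$ strictly below $T$, and $\psi_\varepsilon$ is a penalty driving $x(\cdot)$ and $y(\cdot)$ together as $\varepsilon \to 0^+$. Guided by condition $(\rm L)$ and by the weight $(T - \tau)^{\alpha - 1}$ carried by the map in~\cref{a}, the natural candidate is
\begin{equation*}
    \psi_\varepsilon(t, x(\cdot), y(\cdot)) \triangleq \frac{1}{\varepsilon} \|a(T \mid t, x(\cdot)) - a(T \mid t, y(\cdot))\|^2 + \frac{1}{\varepsilon} \int_0^T \frac{\|a(\tau \mid t, x(\cdot)) - a(\tau \mid t, y(\cdot))\|^2}{(T - \tau)^{1 - \alpha}} \, \rd \tau.
\end{equation*}
Continuity of the mapping~\cref{a_mapping} and compactness of $X_k$ yield a maximizer $(t_\varepsilon, x_\varepsilon(\cdot), y_\varepsilon(\cdot))$. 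I would then apply $(\rm V_+)$ to $\varphi_1$ with test functional $\psi_\varepsilon(\cdot, \cdot, y_\varepsilon(\cdot)) + \mu(T - \cdot)$ and $(\rm V_-)$ to $\varphi_2$ with $-\psi_\varepsilon(\cdot, x_\varepsilon(\cdot), \cdot) - \mu(T - \cdot)$, subtract the two HJBI inequalities, and invoke property (jj), so as to produce, after passing $\varepsilon \to 0^+$, an estimate contradicting $\mu > 0$.

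The hard part, flagged by the introduction as the paper's principal contribution, is twofold. First, one must exhibit a $\psi_\varepsilon$ that is genuinely $ci$-smooth of order $\alpha$ in each path argument and compute its $ci$-derivatives in usable form; because $ci$-differentiability probes the increment of a functional against $(^C D^\alpha y)$, even the quadratic penalty in the ``frozen future'' $a(\cdot \mid t, x(\cdot))$ requires a direct verification against the defining expansion, exploiting the semigroup identity~\cref{semigroup_property}. Second, the $ci$-gradients $\nabla^\alpha \psi_\varepsilon$ at $(t_\varepsilon, x_\varepsilon(\cdot), y_\varepsilon(\cdot))$ must be bounded uniformly in $\varepsilon$, otherwise property (jj) cannot close the contradiction. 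This is where the local Lipschitz hypothesis $(\rm L)$ is indispensable: comparing the value of $w_\varepsilon$ at its maximum with $w_\varepsilon(t_0, x_0(\cdot), x_0(\cdot))$ gives a quantitative grip on $\psi_\varepsilon(t_\varepsilon, x_\varepsilon(\cdot), y_\varepsilon(\cdot))$, and $(\rm L)$, applied in the spirit of~\cite[Lemma~7.6]{Plaksin_2021}, converts this grip into the $\varepsilon$-uniform gradient bound needed to complete the proof.
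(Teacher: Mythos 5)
Your one-line derivation of the corollary from \cref{theorem_existence,theorem_uniqueness} is exactly the paper's argument; the corollary is stated there with no further proof. The problem lies in the sketch of \cref{theorem_uniqueness} that you attach: it is structurally right (doubling of variables, penalization, condition (L) used to bound the gradients), but the quadratic penalty you propose misses the point the paper singles out as its main technical contribution, and it does not work for general $\alpha \in (0,1)$.

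Concretely, when a functional of the form $\int_0^T G\bigl(a(\xi \mid t, x(\cdot)) - a_\ast(\xi)\bigr)(T-\xi)^{-w}\,\rd\xi$ is $ci$-differentiated, the increment $a(\xi \mid \tau, y(\cdot)) - a(\xi \mid t, x(\cdot))$ behaves like $z(\tau)/\bigl(\Gamma(\alpha)(\xi - t)^{1-\alpha}\bigr)$ with $z(\tau) = \int_t^\tau ({}^C D^\alpha y)(\eta)\,\rd\eta$, so the $ci$-gradient acquires an extra factor $(\xi - t)^{-(1-\alpha)}$ under the integral (see~\cref{lemma_nu_properties_item_c_derivative_x}). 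To obtain the $\varepsilon$-uniform bound on $\|\nabla^\alpha \psi_\varepsilon\|/\varepsilon$ you must pass, via H\"older's inequality, from this gradient to a power of the penalty value, and that step requires $\int_t^T (\xi - t)^{-(1-\alpha)q}(T - \xi)^{-\cdots}\,\rd\xi < \infty$, i.e., $(1-\alpha)q < 1$, where $q$ is the exponent of the penalty (compare~\cref{proof_d_3,proof_d_4}). Your choice $q = 2$ satisfies this only for $\alpha > 1/2$; for $\alpha \le 1/2$ the analogue of \cref{lemma_gradient_properties} fails and the contradiction cannot be closed. This is precisely why the paper takes $q = 2/(2-\alpha) \in (1,2)$, inserts the regularization $\varepsilon^{2/(q-1)}$ inside the penalty (for $q < 2$ the map $r \mapsto \|r\|^q$ is not twice differentiable at the origin, and a uniform Taylor remainder is needed to verify $ci$-smoothness), and shifts the weight to $(T-\xi)^{-(1-\alpha-\beta)q}$ with $0 < \beta < \min\{1-\alpha,\, \alpha/2\}$ so that both the penalty and the comparison with condition (L) in \cref{lemma_Lipschitz_via_nu} remain finite. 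A secondary but also genuine gap: you double only the path variables and keep a single time $t$ in $w_\varepsilon$. Then the test functional for $\varphi_1$ contains $a(\cdot \mid t, y_\varepsilon(\cdot))$, whose dependence on $t$ makes $\partial_t^\alpha$ nontrivial and is not covered by a statement of the type \cref{lemma_ci_smoothness}. The paper doubles time as well, adding the term $(t-\tau)^2/\varepsilon^{3/\alpha}$, so that the frozen argument $(\tau_\varepsilon, y_\varepsilon(\cdot))$ is a genuine constant, $\partial_t^\alpha \mu_\varepsilon^{(\tau_\varepsilon, y_\varepsilon(\cdot))} \equiv 0$, and the two HJBI inequalities can be subtracted cleanly.
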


    The proof of~\cref{theorem_uniqueness} is carried out by the scheme from~\cite[Theorem~2]{Lukoyanov_2007_IMM_Eng} but with a different choice of an auxiliary functional needed for the construction of appropriate test functionals.
    Furthermore, the additional local Lipschitz continuity requirement in~\cref{definition_viscosity} is used similarly to, e.g.,~\cite[Lemma~7.6]{Plaksin_2021} in order to derive certain boundedness properties of $ci$-gradients of the order $\alpha$ of the obtained test functionals.
    Below, in~\cref{subsection_Lemmas}, we introduce the auxiliary functional and describe its properties.
    After that, in~\cref{subsection_Proof_Uniqueness}, we prove~\cref{theorem_uniqueness}.

    \subsection{Auxiliary functional}
    \label{subsection_Lemmas}

        Denote $q \triangleq 2 / (2 - \alpha) \in (1, 2)$ and take $\beta > 0$ such that $\beta < 1 - \alpha$ and $\beta < \alpha / 2$.
        For every number $\varepsilon > 0$, consider a functional $\nu_\varepsilon \colon [0, T] \times AC^\alpha \times [0, T] \times AC^\alpha \to \mathbb{R}$ given by
        \begin{align}
            & \nu_\varepsilon(t, x(\cdot), \tau, y(\cdot)) \label{nu} \\[0.3em]
            & \quad \triangleq \bigl( \varepsilon^{\frac{2}{q - 1}} + \|a(T \mid t, x(\cdot)) - a(T \mid \tau, y(\cdot))\|^2 \bigr)^{\frac{q}{2}} \nonumber \\[0.1em]
            & \qquad
            + \int_{0}^{T} \frac{\bigl( \varepsilon^{\frac{2}{q - 1}} + \|a(\xi \mid t, x(\cdot)) - a(\xi \mid \tau, y(\cdot))\|^2 \bigr)^{\frac{q}{2}}}
            {(T - \xi)^{(1 - \alpha - \beta) q}} \, \rd \xi
            - C_1 \varepsilon^{\frac{q}{q - 1}} \nonumber
        \end{align}
        for all $(t, x(\cdot))$, $(\tau, y(\cdot)) \in [0, T] \times AC^\alpha$, where the functions $a(\cdot \mid t, x(\cdot))$ and $a(\cdot \mid \tau, y(\cdot))$ are defined according to~\cref{a} and the number $C_1$ is as follows:
        \begin{displaymath}
            C_1
            \triangleq 1 + \frac{T^{1 - (1 - \alpha - \beta) q}}{1 - (1 - \alpha - \beta) q}.
        \end{displaymath}
        Note that
        \begin{displaymath}
            (1 - \alpha - \beta) q
            < (1 - \alpha) q
            = 1 - \frac{\alpha}{2 - \alpha}
            < 1.
        \end{displaymath}
        Hence, in particular, we have $C_1 > 0$ and, in view of continuity of the functions $a(\cdot \mid t, x(\cdot))$ and $a(\cdot \mid \tau, y(\cdot))$, the integral term in~\cref{nu} is well-defined.

        \begin{remark} \label{remark_limiting_case_functional}
            In the limiting case $\alpha = 1$, we obtain $q = 2$, $\beta = 0$, and
            \begin{align*}
                & \nu_\varepsilon(t, x(\cdot), \tau, y(\cdot))
                = \|a(T \mid t, x(\cdot)) - a(T \mid \tau, y(\cdot))\|^2 \\[0.1em]
                & \hphantom{\nu_\varepsilon(t, x(\cdot), \tau, y(\cdot))} \quad
                + \int_{0}^{T} \|a(\xi \mid t, x(\cdot)) - a(\xi \mid \tau, y(\cdot))\|^2 \, \rd \xi
            \end{align*}
            for all $\varepsilon > 0$ and all $(t, x(\cdot))$, $(\tau, y(\cdot)) \in [0, T] \times AC^1$.
            Hence, by virtue of~\cref{a_limiting_case}, the functional $\nu_\varepsilon$ turns into the functional proposed in the proof of~\cite[Theorem~2]{Lukoyanov_2007_IMM_Eng}.
        \end{remark}

        The five lemmas below establish the properties of the functional $\nu_\varepsilon$ that are used in the proof of~\cref{theorem_uniqueness}, sometimes without explicit reference.
        \begin{lemma} \label{lemma_nu_is_continuous}
            For any $\varepsilon > 0$, the functional $\nu_\varepsilon$ is non-negative and continuous.
            In addition, the equalities
            \begin{equation} \label{lemma_nu_properties_item_a}
                \nu_\varepsilon(t, x(\cdot), t, x(\cdot))
                = 0,
                \quad \nu_\varepsilon(t, x(\cdot), \tau, y(\cdot))
                = \nu_\varepsilon(\tau, y(\cdot), t, x(\cdot)),
            \end{equation}
            and
            \begin{equation} \label{lemma_nu_properties_item_b}
                \nu_\varepsilon \bigl( t^\prime, a(\cdot \mid t, x(\cdot)), \tau^\prime, a(\cdot \mid \tau, y(\cdot)) \bigr)
                = \nu_\varepsilon(t, x(\cdot), \tau, y(\cdot))
            \end{equation}
            are valid for any $(t, x(\cdot))$, $(\tau, y(\cdot)) \in [0, T] \times AC^\alpha$, any $t^\prime \in [t, T]$, and any $\tau^\prime \in [\tau, T]$.
        \end{lemma}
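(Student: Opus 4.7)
The plan is to verify the claims directly from \cref{nu}, leveraging the semigroup identity \cref{semigroup_property}, the continuity of the mapping \cref{a_mapping}, the bound \cref{a_Lipschitz}, and one explicit integral computation. I would begin by evaluating $\nu_\varepsilon$ on the diagonal $(\tau, y(\cdot)) = (t, x(\cdot))$: all the differences $a(\xi \mid t, x(\cdot)) - a(\xi \mid t, x(\cdot))$ vanish, so the boundary term equals $\varepsilon^{q/(q-1)}$ and the integrand reduces to $\varepsilon^{q/(q-1)}(T-\xi)^{-(1-\alpha-\beta)q}$. Since $(1-\alpha-\beta)q < 1$, the integral $\int_{0}^{T}(T-\xi)^{-(1-\alpha-\beta)q}\,\rd\xi$ equals $T^{1-(1-\alpha-\beta)q}/(1-(1-\alpha-\beta)q)$, and combining these two contributions with the subtracted constant $C_1 \varepsilon^{q/(q-1)}$ yields exactly zero by the very definition of $C_1$. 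This both establishes the first identity in \cref{lemma_nu_properties_item_a} and, via the pointwise bound $(\varepsilon^{2/(q-1)} + r^2)^{q/2} \geq \varepsilon^{q/(q-1)}$ applied to both summands of \cref{nu}, shows that $\nu_\varepsilon$ is bounded below by this diagonal value, giving non-negativity.

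The symmetry $\nu_\varepsilon(t, x(\cdot), \tau, y(\cdot)) = \nu_\varepsilon(\tau, y(\cdot), t, x(\cdot))$ is then immediate from $\|u - v\| = \|v - u\|$ applied inside each summand. For the semigroup invariance \cref{lemma_nu_properties_item_b}, the essential input is \cref{semigroup_property}, which yields $a(\cdot \mid t', a(\cdot \mid t, x(\cdot))) = a(\cdot \mid t, x(\cdot))$ whenever $t' \in [t, T]$ (and symmetrically in $\tau$, $\tau'$). Substituting this identity under the norms in \cref{nu} replaces each occurrence of $a(\xi \mid t', a(\cdot \mid t, x(\cdot)))$ by $a(\xi \mid t, x(\cdot))$, leaving both summands and the subtracted constant exactly as they appear on the right-hand side of \cref{lemma_nu_properties_item_b}.

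Continuity will follow from the continuity of the mapping \cref{a_mapping} into $AC^\alpha$, which via the uniform norm delivers uniform convergence in $\xi$ of $a(\xi \mid t_n, x_n(\cdot))$ to $a(\xi \mid t, x(\cdot))$ along any convergent sequence of arguments. The boundary summand of $\nu_\varepsilon$ is then continuous by composition with the continuous map $r \mapsto (\varepsilon^{2/(q-1)} + r^2)^{q/2}$ evaluated at $\xi = T$. For the integral term, I would invoke dominated convergence: on a neighborhood of any fixed point, the bound \cref{a_Lipschitz} yields a uniform $L^\infty$ bound on the relevant functions $a$, hence on the numerator of the integrand, so the integrand is dominated by a constant multiple of $(T-\xi)^{-(1-\alpha-\beta)q}$, which is integrable on $[0, T]$ by the same computation already used to define $C_1$. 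The main technical point is precisely this dominated-convergence step with the singular weight near $\xi = T$, but integrability of the weight is secured by the choices $q = 2/(2-\alpha)$ and $\beta < 1 - \alpha$ (through $(1-\alpha-\beta)q < 1$) that the paper has already arranged, so no further obstacle arises.
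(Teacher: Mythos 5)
Your proposal is correct and follows essentially the same route as the paper: the diagonal identity and non-negativity come directly from the definition of $\nu_\varepsilon$ and the choice of $C_1$, the invariance~\cref{lemma_nu_properties_item_b} comes from the semigroup relation~\cref{semigroup_property}, and continuity comes from uniform convergence of $a(\cdot \mid t_i, x_i(\cdot))$ (continuity of mapping~\cref{a_mapping}) combined with integrability of the weight $(T-\xi)^{-(1-\alpha-\beta)q}$. The only cosmetic difference is that you invoke dominated convergence where the paper passes the uniform convergence of the composed integrand $b_i$ directly through the integral; both rest on the same two ingredients.
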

        \begin{proof}
            See~\cref{Appendix}.
        \end{proof}

        The next result provides a connection between the functional $\nu_\varepsilon$ and local Lipschitz continuity condition $(\rm L)$.
        \begin{lemma} \label{lemma_Lipschitz_via_nu}
            There exists a number $C_2 > 0$ such that
            \begin{align}
                & \|a(T \mid t, x(\cdot)) - a(T \mid \tau, y(\cdot))\|
                + \int_{0}^{T} \frac{\|a(\xi \mid t, x(\cdot)) - a(\xi \mid \tau, y(\cdot))\|}{(T - \xi)^{1 - \alpha}} \, \rd \xi \label{Lipschitz_via_nu_main} \\[0.2em]
                & \quad \leq C_2 \bigl( \nu_\varepsilon(t, x(\cdot), \tau, y(\cdot)) + C_1 \varepsilon^{\frac{q}{q - 1}} \bigr)^{\frac{1}{q}} \nonumber
            \end{align}
            for any $\varepsilon > 0$ and any $(t, x(\cdot))$, $(\tau, y(\cdot)) \in [0, T] \times AC^\alpha$.
        \end{lemma}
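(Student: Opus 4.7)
The plan is to reduce everything to a single application of H\"older's inequality, with exponents chosen so that the weight $(T-\xi)^{-(1-\alpha)}$ factors through the weight $(T-\xi)^{-(1-\alpha-\beta)q}$ appearing in the definition of $\nu_\varepsilon$. Throughout, abbreviate $d(\xi) \triangleq \|a(\xi \mid t, x(\cdot)) - a(\xi \mid \tau, y(\cdot))\|$ for $\xi \in [0, T]$, and set $p \triangleq q / (q - 1) = 2 / \alpha$, the H\"older conjugate of $q$.

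First I would observe two pointwise lower bounds that come directly from the definition of $\nu_\varepsilon$ and the nonnegativity of its summands. Since $d(\xi)^q \leq (\varepsilon^{2/(q-1)} + d(\xi)^2)^{q/2}$ for every $\xi$, the first summand in~\cref{nu} gives
\begin{displaymath}
    d(T)^q
    \leq \nu_\varepsilon(t, x(\cdot), \tau, y(\cdot)) + C_1 \varepsilon^{\frac{q}{q-1}},
\end{displaymath}
while the integral summand gives
\begin{displaymath}
    \int_{0}^{T} \frac{d(\xi)^q}{(T-\xi)^{(1 - \alpha - \beta) q}} \, \rd \xi
    \leq \nu_\varepsilon(t, x(\cdot), \tau, y(\cdot)) + C_1 \varepsilon^{\frac{q}{q-1}}.
\end{displaymath}
The first estimate already handles the boundary term $d(T)$ with constant~$1$.

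Next I would write $(T - \xi)^{-(1 - \alpha)} = (T - \xi)^{-(1 - \alpha - \beta)} \cdot (T - \xi)^{-\beta}$ and apply H\"older's inequality with exponents $q$ and $p$:
\begin{displaymath}
    \int_{0}^{T} \frac{d(\xi)}{(T-\xi)^{1 - \alpha}} \, \rd \xi
    \leq \Biggl( \int_{0}^{T} \frac{d(\xi)^q}{(T-\xi)^{(1 - \alpha - \beta) q}} \, \rd \xi \Biggr)^{\!1/q}
    \Biggl( \int_{0}^{T} \frac{\rd \xi}{(T-\xi)^{\beta p}} \Biggr)^{\!1/p}.
\end{displaymath}
The key point is that $\beta p = 2 \beta / \alpha < 1$ by the condition $\beta < \alpha/2$ imposed at the top of~\cref{subsection_Lemmas}, so the second factor equals $(T^{1 - \beta p} / (1 - \beta p))^{1/p} < \infty$; and the first factor is bounded by $(\nu_\varepsilon + C_1 \varepsilon^{q/(q-1)})^{1/q}$ by the second lower bound above.

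Adding the boundary estimate and the integral estimate yields~\cref{Lipschitz_via_nu_main} with
\begin{displaymath}
    C_2
    \triangleq 1 + \Biggl( \frac{T^{1 - \beta p}}{1 - \beta p} \Biggr)^{\!1/p},
\end{displaymath}
which depends only on $T$, $\alpha$, and $\beta$. There is no real obstacle here: the entire argument is a one-line H\"older estimate. What genuinely matters is the \emph{choice} of exponents $q$ and $\beta$ in the definition of $\nu_\varepsilon$, namely $q = 2/(2-\alpha)$ together with $\beta < 1 - \alpha$ and $\beta < \alpha/2$; the first makes $(1 - \alpha - \beta) q < 1$ so that $\nu_\varepsilon$ itself is finite, and the second makes $\beta p < 1$ so that the complementary H\"older factor is finite. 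The lemma is the formal manifestation that these two integrability conditions can be satisfied simultaneously whenever $\alpha \in (0, 1)$.
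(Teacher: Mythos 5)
Your proof is correct and follows essentially the same route as the paper's: the boundary term is absorbed directly into the first summand of $\nu_\varepsilon$, and the integral term is handled by splitting $(T-\xi)^{-(1-\alpha)}$ as $(T-\xi)^{-(1-\alpha-\beta)}(T-\xi)^{-\beta}$ and applying H\"older with exponents $q$ and $q/(q-1)=2/\alpha$, using $\beta<\alpha/2$ to make the complementary factor finite. Your constant $C_2$ coincides with the paper's after rewriting $1/p=(q-1)/q$, so there is nothing to add.
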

        \begin{proof}
            See~\cref{Appendix}.
        \end{proof}

        In addition, the following convergence property takes place.
        \begin{lemma} \label{lemma_convergene}
            Let a compact set $X \subset AC^\alpha$ be fixed and let $(t_\varepsilon, x_\varepsilon(\cdot))$, $(\tau_\varepsilon, y_\varepsilon(\cdot)) \in [0, T] \times X$ be given for every $\varepsilon > 0$.
            Suppose that $\nu_\varepsilon(t_\varepsilon, x_\varepsilon(\cdot), \tau_\varepsilon, y_\varepsilon(\cdot)) \to 0$ as $\varepsilon \to 0^+$.
            Then, it holds that
            \begin{equation} \label{convergence}
                \|a(\cdot \mid t_\varepsilon, x_\varepsilon(\cdot)) - a(\cdot \mid \tau_\varepsilon, y_\varepsilon(\cdot))\|_\infty
                \to 0
                \text{ as } \varepsilon \to 0^+.
            \end{equation}
        \end{lemma}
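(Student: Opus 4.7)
The plan is to argue by contradiction using compactness of $X$, extracting convergent subsequences and then passing to the limit in \eqref{nu} via dominated convergence.

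Assume \eqref{convergence} fails. Then there exist $\delta > 0$ and a sequence $\varepsilon_k \to 0^+$ with $\|a(\cdot \mid t_{\varepsilon_k}, x_{\varepsilon_k}(\cdot)) - a(\cdot \mid \tau_{\varepsilon_k}, y_{\varepsilon_k}(\cdot))\|_\infty \geq \delta$. Since $[0, T] \times X$ is compact (recall that $X \subset AC^\alpha$ is compact in the uniform norm), I may pass to a further subsequence along which $t_{\varepsilon_k} \to t^\ast$, $\tau_{\varepsilon_k} \to \tau^\ast$, and $x_{\varepsilon_k}(\cdot) \to x^\ast(\cdot)$, $y_{\varepsilon_k}(\cdot) \to y^\ast(\cdot)$ in $AC^\alpha$, with limits in $[0, T] \times X$. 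Continuity of mapping~\eqref{a_mapping} then yields the uniform convergences $a(\cdot \mid t_{\varepsilon_k}, x_{\varepsilon_k}(\cdot)) \to a(\cdot \mid t^\ast, x^\ast(\cdot))$ and $a(\cdot \mid \tau_{\varepsilon_k}, y_{\varepsilon_k}(\cdot)) \to a(\cdot \mid \tau^\ast, y^\ast(\cdot))$ on $[0, T]$, so that
\[
    \|a(\cdot \mid t^\ast, x^\ast(\cdot)) - a(\cdot \mid \tau^\ast, y^\ast(\cdot))\|_\infty \geq \delta. \qquad (\ast)
\]

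Next I pass to the limit as $k \to \infty$ in the expression \eqref{nu} for $\nu_{\varepsilon_k}(t_{\varepsilon_k}, x_{\varepsilon_k}(\cdot), \tau_{\varepsilon_k}, y_{\varepsilon_k}(\cdot))$. Compactness of $X$ together with \eqref{a_Lipschitz} provides a uniform sup-norm bound on all paths $a(\cdot \mid t, x(\cdot))$ with $(t, x(\cdot)) \in [0, T] \times X$; combined with $\varepsilon_k \to 0$, this bounds the integrand in \eqref{nu} by a constant multiple of $(T - \xi)^{-(1 - \alpha - \beta) q}$, which is integrable on $[0, T]$ since the choices $\beta < 1 - \alpha$ and $q = 2/(2 - \alpha)$ ensure $(1 - \alpha - \beta) q < 1$. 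Dominated convergence, the uniform limits established above, and the vanishing $C_1 \varepsilon_k^{q/(q - 1)} \to 0$ then give
\[
    0 = \|a(T \mid t^\ast, x^\ast(\cdot)) - a(T \mid \tau^\ast, y^\ast(\cdot))\|^q + \int_0^T \frac{\|a(\xi \mid t^\ast, x^\ast(\cdot)) - a(\xi \mid \tau^\ast, y^\ast(\cdot))\|^q}{(T - \xi)^{(1 - \alpha - \beta) q}} \, \rd \xi.
\]
Both summands being non-negative, each must vanish; the a.e.\ equality forced by the integral term, combined with continuity of the paths $a(\cdot \mid t^\ast, x^\ast(\cdot))$ and $a(\cdot \mid \tau^\ast, y^\ast(\cdot))$, promotes to equality on all of $[0, T]$, contradicting $(\ast)$.

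The main technical point is securing the integrable majorant needed for dominated convergence; this is essentially bookkeeping that leans on the specific choices of $\beta$ and $q$ made in constructing $\nu_\varepsilon$. The rest of the argument is a standard contradiction-and-extraction scheme that exploits the compactness of $X$ and the continuity of mapping~\eqref{a_mapping}.
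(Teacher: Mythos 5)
Your proof is correct and follows essentially the same route as the paper's: contradiction, extraction of convergent subsequences by compactness of $[0,T]\times X$, continuity of mapping~\cref{a_mapping}, and passage to the limit in the integral term of~\cref{nu} to force $a(\cdot \mid t^\ast, x^\ast(\cdot)) = a(\cdot \mid \tau^\ast, y^\ast(\cdot))$. The only cosmetic difference is that you invoke dominated convergence on the full expression for $\nu_{\varepsilon_k}$, whereas the paper first drops the $\varepsilon^{2/(q-1)}$ term to lower-bound the integrand by $\|a(\xi\mid\cdot)-a(\xi\mid\cdot)\|^q/(T-\xi)^{(1-\alpha-\beta)q}$ and then passes to the limit; both are valid.
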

        \begin{proof}
            See~\cref{Appendix}.
        \end{proof}

        Further, for every $\varepsilon > 0$ and every $(\tau_\ast, y_\ast(\cdot)) \in [0, T] \times AC^\alpha$, consider a functional $\mu_\varepsilon^{(\tau_\ast, y_\ast(\cdot))} \colon [0, T] \times AC^\alpha \to \mathbb{R}$ defined by
        \begin{equation} \label{psi_varepsilon}
            \mu_\varepsilon^{(\tau_\ast, y_\ast(\cdot))} (t, x(\cdot))
            \triangleq \nu_\varepsilon(t, x(\cdot), \tau_\ast, y_\ast(\cdot))
        \end{equation}
        for all $(t, x(\cdot)) \in [0, T] \times AC^\alpha$.

        \begin{lemma} \label{lemma_ci_smoothness}
            For any $\varepsilon > 0$ and any $(\tau_\ast, y_\ast(\cdot)) \in [0, T] \times AC^\alpha$, the functional $\mu_\varepsilon^{(\tau_\ast, y_\ast(\cdot))}$ is $ci$-smooth of the order $\alpha$ and its $ci$-derivatives of the order $\alpha$ are given by
            \begin{equation} \label{lemma_nu_properties_item_c_derivative_t}
                \partial_t^\alpha \mu_\varepsilon^{(\tau_\ast, y_\ast(\cdot))} (t, x(\cdot))
                = 0
            \end{equation}
            and
            \begin{align}
                & \nabla^\alpha \mu_\varepsilon^{(\tau_\ast, y_\ast(\cdot))} (t, x(\cdot)) \label{lemma_nu_properties_item_c_derivative_x} \\[0.3em]
                & \ \ = \frac{q}{\Gamma(\alpha)} \biggl( \frac{a(T \mid t, x(\cdot)) - a_\ast(T)}
                {\bigl( \varepsilon^{\frac{2}{q - 1}} + \|a(T \mid t, x(\cdot)) - a_\ast(T)\|^2 \bigr)^{1 - \frac{q}{2}}
                (T - t)^{1 - \alpha}} \nonumber \\[0.1em]
                & \ \ \ \ + \int_{t}^{T} \frac{a(\xi \mid t, x(\cdot)) - a_\ast(\xi)}
                {\bigl( \varepsilon^{\frac{2}{q - 1}} + \|a(\xi \mid t, x(\cdot)) - a_\ast(\xi)\|^2 \bigr)^{1 - \frac{q}{2}}
                (\xi - t)^{1 - \alpha} (T - \xi)^{(1 - \alpha - \beta) q}} \, \rd \xi \biggr) \nonumber
            \end{align}
            for all $(t, x(\cdot)) \in [0, T) \times AC^\alpha$, where we denote $a_\ast(\cdot) \triangleq a(\cdot \mid \tau_\ast, y_\ast(\cdot))$.
        \end{lemma}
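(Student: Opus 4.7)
The first step is to derive an explicit formula for the difference $\Delta a(\xi) := a(\xi \mid \tau', y(\cdot)) - a(\xi \mid t, x(\cdot))$ for $y(\cdot) \in Y(t, x(\cdot))$ and $\tau' \in (t, T]$. Since $y$ and $x$ agree on $[0, t]$ and both belong to $AC^\alpha$, \cref{I^alpha_D^alpha} forces $(^{C}D^\alpha y)(\zeta) = (^{C}D^\alpha x)(\zeta)$ for a.e.\ $\zeta \in [0, t]$. Plugging this into \cref{a} yields $\Delta a(\xi) = 0$ on $[0, t]$ and
\[
\Delta a(\xi) = \frac{1}{\Gamma(\alpha)} \int_{t}^{\min(\xi, \tau')} \frac{(^{C}D^\alpha y)(\zeta)}{(\xi - \zeta)^{1 - \alpha}} \, \rd\zeta
\quad \text{for } \xi \in (t, T].
\]
Continuity of $\mu_\varepsilon^{(\tau_\ast, y_\ast(\cdot))}$ itself is inherited from \cref{lemma_nu_is_continuous}, so only the $ci$-expansion at $(t, x(\cdot))$ requires work.

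Next, setting $a(\xi) := a(\xi \mid t, x(\cdot))$ and $a_\ast(\xi) := a(\xi \mid \tau_\ast, y_\ast(\cdot))$, I Taylor-expand the smooth map $z \mapsto G(z) := (\varepsilon^{2/(q - 1)} + \|z\|^2)^{q/2}$ around $z = a(\xi) - a_\ast(\xi)$ with increment $\Delta a(\xi)$, both for the boundary term $\xi = T$ of $\nu_\varepsilon$ and pointwise under the integral. The first-order correction equals $q (\varepsilon^{2/(q - 1)} + \|a(\xi) - a_\ast(\xi)\|^2)^{q/2 - 1} \langle a(\xi) - a_\ast(\xi), \Delta a(\xi) \rangle$; the remainder is bounded by a constant depending on $\varepsilon$ times $\|\Delta a(\xi)\|^2$, because $\|D^2 G\|$ is bounded on $\mathbb{R}^n$ once $\varepsilon > 0$. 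Substituting the integral formula for $\Delta a$ and swapping the order of integration by Fubini over $\{(\xi, \zeta) : t \leq \zeta \leq \min(\xi, \tau'),\ t \leq \xi \leq T\}$, the linear part takes the form $\int_{t}^{\tau'} \langle (^{C}D^\alpha y)(\zeta), b(\zeta) + \Phi(\zeta) \rangle \, \rd\zeta$, where $b(\zeta)$ and $\Phi(\zeta)$ coincide with the boundary and integral pieces of \cref{lemma_nu_properties_item_c_derivative_x} except that $(T - t)^{1 - \alpha}$ and $(\xi - t)^{1 - \alpha}$ are replaced by $(T - \zeta)^{1 - \alpha}$ and $(\xi - \zeta)^{1 - \alpha}$, respectively.

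To identify this linear part with $\int_{t}^{\tau'} \langle (^{C}D^\alpha y)(\zeta), \nabla^\alpha \mu_\varepsilon^{(\tau_\ast, y_\ast(\cdot))}(t, x(\cdot)) \rangle \, \rd\zeta + o(\tau' - t)$, I must freeze the $\zeta$-dependent kernels at $\zeta = t$. The substitution $\xi = \zeta + (T - \zeta) v$ in $\Phi$ rewrites it as $(T - \zeta)^{\alpha - (1 - \alpha - \beta) q}$ times a $v$-integral on $[0, 1]$ with only integrable singularities $v^{\alpha - 1}$ and $(1 - v)^{-(1 - \alpha - \beta) q}$ (using $(1 - \alpha - \beta) q < 1$); the $\varepsilon$-regularization keeps $\|z\| / (\varepsilon^{2/(q - 1)} + \|z\|^2)^{1 - q/2}$ uniformly bounded, so dominated convergence yields $\Phi(\zeta) \to \Phi(t)$ as $\zeta \to t^+$, and essential boundedness of $(^{C}D^\alpha y)$ converts this into the desired $o(\tau' - t)$; an analogous treatment handles $b(\zeta) - b(t)$. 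For the Taylor remainder, using $\|\Delta a(\xi)\| \leq C(\xi - t)^\alpha$ on $[t, \tau']$ and $\|\Delta a(\xi)\| \leq C(\tau' - t)(\xi - \tau')^{\alpha - 1}$ on $(\tau', T]$ (the latter by the mean value theorem applied to $s \mapsto (\xi - s)^\alpha$), and splitting the second interval at $\xi = 2\tau' - t$, I verify that $\int_{t}^{T} \|\Delta a(\xi)\|^2 (T - \xi)^{-(1 - \alpha - \beta) q} \, \rd\xi = o(\tau' - t)$ for every $\alpha \in (0, 1)$. Combining these steps proves both \cref{lemma_nu_properties_item_c_derivative_t} and \cref{lemma_nu_properties_item_c_derivative_x}. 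Continuity of the resulting $ci$-derivatives as mappings on $[0, T) \times AC^\alpha$ then follows from continuity of $(t, x(\cdot)) \mapsto a(\cdot \mid t, x(\cdot))$ (cf.\ \cref{a_mapping}) combined with the substitution $\xi = t + (T - t) v$ that turns the moving interval $[t, T]$ into $[0, 1]$, after which dominated convergence applies directly. The principal obstacle is the remainder estimate: the naive pointwise bound $\|\Delta a(\xi)\| = O((\xi - t)^\alpha)$ is too crude to yield an $o(\tau' - t)$ contribution on $(\tau', T]$, so the mean value refinement together with the careful splitting of the integration interval is indispensable, especially for $\alpha \leq 1/2$ where the integrand $(\xi - \tau')^{2\alpha - 2}$ is not locally integrable at $\tau'$.
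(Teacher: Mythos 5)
Your proposal is correct and shares the paper's overall skeleton (explicit formula for $\Delta a(\xi) = a(\xi \mid \tau', y(\cdot)) - a(\xi \mid t, x(\cdot))$, second-order Taylor expansion of $z \mapsto (\varepsilon^{2/(q-1)} + \|z\|^2)^{q/2}$, and the two regimes of bounds $\|\Delta a(\xi)\| = O((\tau'-t)^\alpha)$ and $\|\Delta a(\xi)\| = O((\tau'-t)(\xi-\tau')^{\alpha-1})$), but it executes the two central steps differently. To identify the linear term with the claimed gradient, the paper uses an integration-by-parts identity to compare $\Delta a(\xi)$ directly with $z(\tau')/(\Gamma(\alpha)(\xi-t)^{1-\alpha})$, where $z(\tau') = \int_t^{\tau'} ({}^C D^\alpha y)(\eta)\,\rd\eta$, and then integrates the resulting pointwise bound against the kernel, obtaining quantitative rates $O((\tau'-t)^2)$ for the boundary term and $O((\tau'-t)^{1+\alpha})$ for the integral term; you instead apply Fubini to get the exact linear functional $\int_t^{\tau'}\langle ({}^C D^\alpha y)(\zeta), b(\zeta)+\Phi(\zeta)\rangle\,\rd\zeta$ and freeze the kernels at $\zeta=t$ by a dominated-convergence continuity argument, which yields only a qualitative $o(\tau'-t)$ but is entirely sufficient for the definition of $ci$-differentiability (the $o$ may depend on $y(\cdot)$). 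Likewise, for continuity of $\nabla^\alpha\mu_\varepsilon^{(\tau_\ast,y_\ast(\cdot))}$ you use the substitution $\xi = t + (T-t)v$ and dominated convergence where the paper performs explicit estimates via the beta-function identity; your route is arguably cleaner. For the quadratic remainder on $(\tau',T]$ the paper avoids your interval splitting by multiplying one H\"older factor $(\tau'-t)^\alpha(\xi-\tau')^{0}$ against one Lipschitz factor $(\tau'-t)(\xi-\tau')^{\alpha-1}$, so that only a single integrable power of $(\xi-\tau')^{\alpha-1}$ appears; your splitting at $2\tau'-t$ also works, though you should additionally separate the singularity of the weight $(T-\xi)^{-(1-\alpha-\beta)q}$ at $\xi=T$ from that of $(\xi-\tau')^{2\alpha-2}$ at $\xi=\tau'$ (a routine extra split, since $t,\tau'$ stay in $[0,T-\theta]$). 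Two small slips worth correcting: the quantity $\|z\|/(\varepsilon^{2/(q-1)}+\|z\|^2)^{1-q/2}$ is \emph{not} uniformly bounded on all of $\mathbb{R}^n$ (it grows like $\|z\|^{q-1}$), but this is harmless because $a(\cdot\mid t,x(\cdot)) - a_\ast(\cdot)$ is bounded; and the domination needed for your DCT arguments should be stated on the relevant bounded set rather than globally.
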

        \begin{proof}
            See~\cref{Appendix}.
        \end{proof}

        Let us observe that formula~\cref{lemma_nu_properties_item_c_derivative_x} itself is not needed for the proof of~\cref{theorem_uniqueness}, but it allows us to obtain
        \begin{lemma} \label{lemma_gradient_properties}
            For any $\theta \in (0, T)$, there are $C_3 > 0$ and $C_4 > 0$ such that
            \begin{equation} \label{gradient_bound}
                \| \nabla^\alpha \mu_\varepsilon^{(\tau, y(\cdot))} (t, x(\cdot)) \|
                \leq C_3 \bigl( \nu_\varepsilon(t, x(\cdot), \tau, y(\cdot)) + C_1 \varepsilon^{\frac{q}{q - 1}} \bigr)^{\frac{q - 1}{q}}
            \end{equation}
            and
            \begin{align}
                & \| \nabla^\alpha \mu_\varepsilon^{(\tau, y(\cdot))} (t, x(\cdot))
                + \nabla^\alpha \mu_\varepsilon^{(t, x(\cdot))} (\tau, y(\cdot)) \| \label{difference_of_gradients} \\[0.1em]
                & \quad \leq C_4 \bigl( \varepsilon^{\frac{2}{q - 1}} + \|a(\cdot \mid t, x(\cdot)) - a(\cdot \mid \tau, y(\cdot))\|_\infty^2 \bigr)^{\frac{q - 1}{2}}
                |t - \tau|^\alpha \nonumber
            \end{align}
            for any $\varepsilon > 0$ and any $(t, x(\cdot))$, $(\tau, y(\cdot)) \in [0, T - \theta] \times AC^\alpha$.
        \end{lemma}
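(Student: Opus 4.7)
The strategy is to derive both bounds directly from the explicit formula~\cref{lemma_nu_properties_item_c_derivative_x} obtained in~\cref{lemma_ci_smoothness}. The elementary pointwise estimate driving the whole argument is
\begin{displaymath}
\|v\|\bigl(\varepsilon^{2/(q-1)} + \|v\|^2\bigr)^{(q-2)/2}
\leq \bigl(\varepsilon^{2/(q-1)} + \|v\|^2\bigr)^{(q-1)/2},
\quad v \in \mathbb{R}^n,
\end{displaymath}
valid because $q \in (1, 2)$ and $\|v\|^2 \leq \varepsilon^{2/(q-1)} + \|v\|^2$.

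For~\cref{gradient_bound}, I would apply the triangle inequality to~\cref{lemma_nu_properties_item_c_derivative_x} and use the estimate above to factor out $\bigl(\varepsilon^{2/(q-1)} + \|a(\xi \mid t, x(\cdot)) - a(\xi \mid \tau, y(\cdot))\|^2\bigr)^{(q-1)/2}$ from each term. The boundary term is then immediate from $T - t \geq \theta$. For the integral term I would apply H\"older's inequality with conjugate exponents $q$ and $q/(q-1)$, pairing the numerator with the weight $(T-\xi)^{-(1-\alpha-\beta)(q-1)}$, so that the $q/(q-1)$-th power of the first factor reproduces the integrand in~\cref{nu} and is bounded by $\nu_\varepsilon(t, x(\cdot), \tau, y(\cdot)) + C_1 \varepsilon^{q/(q-1)}$. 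The complementary factor is the $q$-th root of the Beta-type integral $\int_t^T (\xi-t)^{-(1-\alpha)q}(T-\xi)^{-(1-\alpha-\beta)q}\,\rd\xi$, which is finite since $(1-\alpha) q = 2(1-\alpha)/(2-\alpha) < 1$ and $(1-\alpha-\beta) q < 1$, and which, after the substitution $\xi = t + (T-t) u$, admits a uniform bound depending only on $\theta$, $\alpha$, $\beta$, $T$ on the range $t \in [0, T - \theta]$.

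For~\cref{difference_of_gradients}, the crucial observation is that under the swap $(t, x(\cdot)) \leftrightarrow (\tau, y(\cdot))$, the vector $a(\cdot \mid t, x(\cdot)) - a(\cdot \mid \tau, y(\cdot))$ changes sign while its squared norm inside the denominator is invariant. Hence, after factoring out $\bigl(\varepsilon^{2/(q-1)} + \|a(\cdot \mid t, x(\cdot)) - a(\cdot \mid \tau, y(\cdot))\|_\infty^2\bigr)^{(q-1)/2}$ by the same pointwise bound, controlling the sum of gradients reduces to a pure kernel-difference estimate: it suffices to prove
\begin{displaymath}
\left|\frac{1}{(T-t)^{1-\alpha}} - \frac{1}{(T-\tau)^{1-\alpha}}\right|
+ \int_0^T \left|\frac{\mathbf{1}_{[t,T]}(\xi)}{(\xi-t)^{1-\alpha}} - \frac{\mathbf{1}_{[\tau,T]}(\xi)}{(\xi-\tau)^{1-\alpha}}\right| \frac{\rd\xi}{(T-\xi)^{(1-\alpha-\beta)q}}
\leq C(\theta)|t-\tau|^\alpha.
\end{displaymath}
The boundary piece is Lipschitz in $t$ on $\{T - t \geq \theta\}$ and combines with $|t - \tau| \leq T^{1-\alpha}|t - \tau|^\alpha$ to give the required $\alpha$-H\"older bound.

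The main obstacle is the integral piece. Assuming without loss of generality $t \leq \tau$, I split it into the non-overlapping part $\int_t^\tau(\xi-t)^{-(1-\alpha)}(T-\xi)^{-(1-\alpha-\beta)q}\,\rd\xi \leq \theta^{-(1-\alpha-\beta)q}(\tau-t)^\alpha/\alpha$ (using $T - \xi \geq \theta$), and the overlapping part $\int_\tau^T\bigl[(\xi-\tau)^{\alpha-1} - (\xi-t)^{\alpha-1}\bigr](T-\xi)^{-(1-\alpha-\beta)q}\,\rd\xi$, which I split further at $\xi = \tau + (\tau - t)$ whenever this point lies in $[\tau, T]$. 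On the near-diagonal portion $[\tau, \tau+(\tau-t)]$ I use the trivial bound $(\xi-\tau)^{\alpha-1} - (\xi-t)^{\alpha-1} \leq (\xi-\tau)^{\alpha-1}$ together with $T - \xi \geq \theta$, obtaining a contribution of order $(\tau-t)^\alpha$. On the far-diagonal portion the mean value theorem gives $(\xi-\tau)^{\alpha-1} - (\xi-t)^{\alpha-1}\leq (1-\alpha)(\tau-t)(\xi-\tau)^{\alpha-2}$; after the substitution $\xi - \tau = (T-\tau) u$, the singular but integrable tail $\int_{(\tau-t)/(T-\tau)}^{1} u^{\alpha-2}(1-u)^{-(1-\alpha-\beta)q}\,\rd u$ contributes like $((\tau-t)/(T-\tau))^{\alpha-1}$; combining with the prefactor $(\tau-t)$ and controlling the surviving power of $T - \tau$ via $T - \tau \geq \theta$ delivers $O(|t-\tau|^\alpha)$. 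The exceptional case $\tau - t > T - \tau$ (where the splitting point already exceeds $T$) is handled crudely on $[\tau, T]$ by the trivial bound, together with the observation that $|t-\tau|^\alpha \geq \theta^\alpha$ absorbs any remaining constants.
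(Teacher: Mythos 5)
Your proposal is correct and follows essentially the same route as the paper: both bounds are read off from the explicit formula \cref{lemma_nu_properties_item_c_derivative_x} using the pointwise estimate $\|v\|(\varepsilon^{2/(q-1)}+\|v\|^2)^{(q-2)/2}\leq(\varepsilon^{2/(q-1)}+\|v\|^2)^{(q-1)/2}$, the same H\"older split with exponents $q$ and $q/(q-1)$ for \cref{gradient_bound}, and the same sign-flip observation reducing \cref{difference_of_gradients} to an $L^1$ kernel-difference estimate of order $|t-\tau|^\alpha$. The only divergence is in how that kernel difference is controlled: the paper exploits the closed-form Beta-function evaluation \cref{Beta} to get Lipschitz dependence on the left endpoint (estimate \cref{tilde_C_1}) and adds the near-diagonal piece, whereas you estimate the overlapping part directly by splitting at $\xi=2\tau-t$ and applying the mean value theorem; both yield the required bound with constants depending only on $\theta$, $\alpha$, $\beta$, $T$.
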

        \begin{proof}
            See~\cref{Appendix}.
        \end{proof}

    \subsection{Proof of~\cref{theorem_uniqueness}}
    \label{subsection_Proof_Uniqueness}

        Suppose that $\varphi_1$ and $\varphi_2$ are two viscosity solutions of Cauchy problem~\cref{Cauchy_problem}.
        In view of~\cref{X_k_union}, to prove the theorem, it suffices to show that, for every $k \in \mathbb{N}$,
        \begin{displaymath}
            \varphi_1 (t, x(\cdot))
            \leq \varphi_2 (t, x(\cdot))
            \quad \forall (t, x(\cdot)) \in [0, T] \times X_k.
        \end{displaymath}
        Arguing by contradiction, assume that there exists $k \in \mathbb{N}$ such that
        \begin{equation} \label{proof_choice_of_varkappa}
            \varkappa
            \triangleq \max_{(t, x(\cdot)) \in [0, T] \times X_k} \bigl( \varphi_1(t, x(\cdot)) - \varphi_2(t, x(\cdot)) \bigr)
            > 0.
        \end{equation}
        Note that the maximum is attained due to continuity of the functionals $\varphi_1$ and $\varphi_2$ and compactness of the set $[0, T] \times X_k$.

        For every $\varepsilon > 0$, define a functional $\Phi_\varepsilon \colon [0, T] \times AC^\alpha \times [0, T] \times AC^\alpha \to \mathbb{R}$ by
        \begin{align*}
            & \Phi_\varepsilon(t, x(\cdot), \tau, y(\cdot))
            = \varphi_1(t, x(\cdot)) - \varphi_2(\tau, y(\cdot)) \\[0.2em]
            & \hphantom{\Phi_\varepsilon(t, x(\cdot), \tau, y(\cdot))} \quad
            - (2 T - t - \tau) \zeta - \frac{(t - \tau)^2}{\varepsilon^{\frac{3}{\alpha}}}
            - \frac{\nu_\varepsilon(t, x(\cdot), \tau, y(\cdot))}{\varepsilon}
        \end{align*}
        for all $(t, x(\cdot))$, $(\tau, y(\cdot)) \in [0, T] \times AC^\alpha$, where
        \begin{equation} \label{proof_choice_of_zeta}
            \zeta
            \triangleq \frac{\varkappa}{4 T}
            > 0
        \end{equation}
        and the functional $\nu_\varepsilon$ is given by~\cref{nu}.
        Taking into account that the functional $\nu_\varepsilon$ is continuous (see~\cref{lemma_nu_is_continuous}), choose $(t_\varepsilon, x_\varepsilon(\cdot))$, $(\tau_\varepsilon, y_\varepsilon(\cdot)) \in [0, T] \times X_k$ such that
        \begin{equation} \label{proof_max}
            \Phi_\varepsilon(t_\varepsilon, x_\varepsilon(\cdot), \tau_\varepsilon, y_\varepsilon(\cdot))
            = \max_{(t, x(\cdot)), (\tau, y(\cdot)) \in [0, T] \times X_k} \Phi_\varepsilon(t, x(\cdot), \tau, y(\cdot)).
        \end{equation}

        For every $\varepsilon > 0$, since $\varphi_1(T, x_\varepsilon(\cdot)) = \sigma(x_\varepsilon(\cdot)) = \varphi_2(T, x_\varepsilon(\cdot))$ according to~\cref{boundary_condition}, we have $\Phi_\varepsilon (t_\varepsilon, x_\varepsilon(\cdot), \tau_\varepsilon, y_\varepsilon(\cdot)) \geq \Phi_\varepsilon (T, x_\varepsilon(\cdot), T, x_\varepsilon(\cdot)) = 0$, which yields
        \begin{equation} \label{proof_t_varepsilon-tau_varepsilon}
            (t_\varepsilon - \tau_\varepsilon)^2
            \leq K_1 \varepsilon^{\frac{3}{\alpha}},
        \end{equation}
        where
        \begin{displaymath}
            K_1
            \triangleq \max_{(t, x(\cdot)), (\tau, y(\cdot)) \in [0, T] \times X_k} \bigl( \varphi_1(t, x(\cdot)) - \varphi_2(\tau, y(\cdot)) \bigr)
            > 0.
        \end{displaymath}
        In particular, we obtain
        \begin{equation} \label{proof_t_varepsilon-tau_varepsilon_limit}
            |t_\varepsilon - \tau_\varepsilon|
            \to 0
            \text{ as } \varepsilon \to 0^+.
        \end{equation}

        Recall that the functionals $\varphi_1$ and $\varphi_2$ satisfy local Lipschitz continuity condition $(\rm L)$.
        Therefore, and in view of~\cref{lemma_Lipschitz_via_nu}, there is a number $K_2 > 0$ such that
        \begin{align}
            & |\varphi_1(t, x(\cdot)) - \varphi_1(t, y(\cdot))|
            + |\varphi_2(t, x(\cdot)) - \varphi_2(t, y(\cdot))| \label{bar_C_1} \\[0.2em]
            & \quad \leq K_2 \bigl( \nu_\varepsilon(t, x(\cdot), t, y(\cdot)) + C_1 \varepsilon^{\frac{q}{q - 1}} \bigr)^{\frac{1}{q}} \nonumber
        \end{align}
        for any $\varepsilon > 0$, any $t \in [0, T]$, and any $x(\cdot)$, $y(\cdot) \in X_k$.
        Let us put $K_3 \triangleq K_2 +  C_1^{\frac{q - 1}{q}}$ and show that, for all $\varepsilon > 0$,
        \begin{equation} \label{proof_nu_varepsilon}
            \bigl( \nu_\varepsilon (t_\varepsilon, x_\varepsilon(\cdot), \tau_\varepsilon, y_\varepsilon(\cdot))
            + C_1 \varepsilon^{\frac{q}{q - 1}} \bigr)^{\frac{q - 1}{q}}
            \leq K_3 \varepsilon.
        \end{equation}
        Fix $\varepsilon > 0$ and suppose that $\tau_\varepsilon \geq t_\varepsilon$ for definiteness.
        Note that $a(\cdot \mid t_\varepsilon, x_\varepsilon(\cdot)) \in X_k$ by~\cref{continuation_is_in_X_k} and, hence, $\Phi_\varepsilon (t_\varepsilon, x_\varepsilon(\cdot), \tau_\varepsilon, y_\varepsilon(\cdot)) \geq \Phi_\varepsilon \bigl( t_\varepsilon, x_\varepsilon(\cdot), \tau_\varepsilon, a(\cdot \mid t_\varepsilon, x_\varepsilon(\cdot)) \bigr)$.
        Then,
        \begin{align}
            & \varphi_2 \bigl( \tau_\varepsilon, a(\cdot \mid t_\varepsilon, x_\varepsilon(\cdot)) \bigr)
            - \varphi_2(\tau_\varepsilon, y_\varepsilon(\cdot)) \label{proof_nu_varepsilon_1} \\[0.1em]
            & \quad \geq \frac{\nu_\varepsilon (t_\varepsilon, x_\varepsilon(\cdot), \tau_\varepsilon, y_\varepsilon(\cdot))}{\varepsilon}
            - \frac{\nu_\varepsilon \bigl( t_\varepsilon, x_\varepsilon(\cdot), \tau_\varepsilon, a(\cdot \mid t_\varepsilon, x_\varepsilon(\cdot)) \bigr)}{\varepsilon}. \nonumber
        \end{align}
        On the other hand, owing to the choice of $K_2$ (see~\cref{bar_C_1}), we get
        \begin{align}
            & \varphi_2 \bigl( \tau_\varepsilon, a(\cdot \mid t_\varepsilon, x_\varepsilon(\cdot)) \bigr)
            - \varphi_2(\tau_\varepsilon, y_\varepsilon(\cdot)) \label{proof_nu_varepsilon_2} \\[0.1em]
            & \quad \leq K_2 \bigl( \nu_\varepsilon \bigl( \tau_\varepsilon, a(\cdot \mid t_\varepsilon, x_\varepsilon(\cdot)), \tau_\varepsilon, y_\varepsilon(\cdot) \bigr) + C_1 \varepsilon^{\frac{q}{q - 1}} \bigr)^{\frac{1}{q}}. \nonumber
        \end{align}
        In addition, due to~\cref{lemma_nu_properties_item_a,lemma_nu_properties_item_b}, we have
        \begin{equation} \label{proof_nu_varepsilon_3}
            \nu_\varepsilon \bigl( t_\varepsilon, x_\varepsilon(\cdot), \tau_\varepsilon, a(\cdot \mid t_\varepsilon, x_\varepsilon(\cdot)) \bigr)
            = \nu_\varepsilon (t_\varepsilon, x_\varepsilon(\cdot), t_\varepsilon, x_\varepsilon(\cdot))
            = 0
        \end{equation}
        and
        \begin{equation} \label{proof_nu_varepsilon_4}
            \nu_\varepsilon \bigl( \tau_\varepsilon, a(\cdot \mid t_\varepsilon, x_\varepsilon(\cdot)), \tau_\varepsilon, y_\varepsilon(\cdot) \bigr)
            = \nu_\varepsilon (t_\varepsilon, x_\varepsilon(\cdot), \tau_\varepsilon, y_\varepsilon(\cdot)).
        \end{equation}
        Thus, putting together relations~\cref{proof_nu_varepsilon_1,proof_nu_varepsilon_2,proof_nu_varepsilon_3,proof_nu_varepsilon_4}, we arrive at the inequality
        \begin{displaymath}
            \frac{\nu_\varepsilon(t_\varepsilon, x_\varepsilon(\cdot), \tau_\varepsilon, y_\varepsilon(\cdot))}{\varepsilon}
            \leq K_2 \bigl( \nu_\varepsilon (t_\varepsilon, x_\varepsilon(\cdot), \tau_\varepsilon, y_\varepsilon(\cdot))
            + C_1 \varepsilon^{\frac{q}{q - 1}} \bigr)^{\frac{1}{q}},
        \end{displaymath}
        which, together with the estimate
        \begin{displaymath}
            \frac{C_1 \varepsilon^{\frac{q}{q - 1}}}{\varepsilon}
            = C_1^{\frac{q - 1}{q}} \bigl( C_1 \varepsilon^{\frac{q}{q - 1}} \bigr)^{\frac{1}{q}}
            \leq C_1^{\frac{q - 1}{q}} \bigl( \nu_\varepsilon(t_\varepsilon, x_\varepsilon(\cdot), \tau_\varepsilon, y_\varepsilon(\cdot))
            + C_1 \varepsilon^{\frac{q}{q - 1}} \bigr)^{\frac{1}{q}},
        \end{displaymath}
        implies~\cref{proof_nu_varepsilon}.

        In particular, inequality~\cref{proof_nu_varepsilon} yields $\nu_\varepsilon (t_\varepsilon, x_\varepsilon(\cdot), \tau_\varepsilon, y_\varepsilon(\cdot)) \to 0$ as $\varepsilon \to 0^+$.
        Hence, in view of compactness of the set $X_k$, it follows from~\cref{lemma_convergene} that
        \begin{equation} \label{proof_a_t-a_tau_limit}
            \|a(\cdot \mid t_\varepsilon, x_\varepsilon(\cdot)) - a(\cdot \mid \tau_\varepsilon, y_\varepsilon(\cdot))\|_\infty
            \to 0
            \text{ as } \varepsilon \to 0^+.
        \end{equation}
        Moreover, by the Arzel\`{a}--Ascoli theorem, all functions from $X_k$ are equicontinuous, and, therefore, taking~\cref{proof_t_varepsilon-tau_varepsilon_limit} into account, we obtain
        \begin{equation} \label{proof_a_t-a_tau_limit_2}
            \|a(t_\varepsilon \mid \tau_\varepsilon, y_\varepsilon(\cdot)) - a(\tau_\varepsilon \mid \tau_\varepsilon, y_\varepsilon(\cdot))\|
            \to 0
            \text{ as } \varepsilon \to 0^+.
        \end{equation}
        According to~\cref{a}, we derive
        \begin{align*}
            & \|x_\varepsilon(t_\varepsilon) - y_\varepsilon(\tau_\varepsilon)\|
            = \|a(t_\varepsilon \mid t_\varepsilon, x_\varepsilon(\cdot)) - a(\tau_\varepsilon \mid \tau_\varepsilon, y_\varepsilon(\cdot))\| \\[0.15em]
            & \hphantom{\|x_\varepsilon(t_\varepsilon) - y_\varepsilon(\tau_\varepsilon)\|}
            \leq \|a(\cdot \mid t_\varepsilon, x_\varepsilon(\cdot)) - a(\cdot \mid \tau_\varepsilon, y_\varepsilon(\cdot))\|_\infty \\[0.15em]
            & \hphantom{\|x_\varepsilon(t_\varepsilon) - y_\varepsilon(\tau_\varepsilon)\|} \quad
            + \|a(t_\varepsilon \mid \tau_\varepsilon, y_\varepsilon(\cdot)) - a(\tau_\varepsilon \mid \tau_\varepsilon, y_\varepsilon(\cdot))\|
        \end{align*}
        for all $\varepsilon > 0$, wherefrom, applying~\cref{proof_a_t-a_tau_limit,proof_a_t-a_tau_limit_2}, we conclude that
        \begin{equation} \label{proof_x_varepsilon-y_varepsilon_limit}
            \|x_\varepsilon(t_\varepsilon) - y_\varepsilon(\tau_\varepsilon)\|
            \to 0
            \text{ as } \varepsilon \to 0^+.
        \end{equation}

        Further, since the functionals $\varphi_1$ and $\varphi_2$ are uniformly continuous on the compact set $[0, T] \times X_k$, there exists $\theta \in (0, T)$ such that
        \begin{equation} \label{proof_choice_of_theta}
            |\varphi_1(t, x(\cdot)) - \varphi_1(T, x(\cdot))|
            + |\varphi_2(t, x(\cdot)) - \varphi_2(T, x(\cdot))|
            \leq \frac{\varkappa}{8}
        \end{equation}
        for any $t \in [T - \theta, T]$ and any $x(\cdot) \in X_k$.
        Recall also that the functionals $\varphi_1$ and $\varphi_2$ are non-anticipative.
        Consequently, for $i \in \{1, 2\}$, we have
        \begin{displaymath}
            \varphi_i(t_\varepsilon, x_\varepsilon(\cdot))
            = \varphi_i \bigl( t_\varepsilon, a(\cdot \mid t_\varepsilon, x_\varepsilon(\cdot)) \bigr),
            \quad
            \varphi_i(\tau_\varepsilon, y_\varepsilon(\cdot))
            = \varphi_i \bigl( \tau_\varepsilon, a(\cdot \mid \tau_\varepsilon, y_\varepsilon(\cdot)) \bigr)
        \end{displaymath}
        for all $\varepsilon > 0$, and, hence, relations~\cref{proof_t_varepsilon-tau_varepsilon_limit,proof_a_t-a_tau_limit} imply that
        \begin{displaymath}
            | \varphi_i(t_\varepsilon, x_\varepsilon(\cdot))
            - \varphi_i(\tau_\varepsilon, y_\varepsilon(\cdot))|
            \to 0
            \text{ as } \varepsilon \to 0^+.
        \end{displaymath}
        Thus, there is a number $\varepsilon_\ast > 0$ such that, for any $\varepsilon \in (0, \varepsilon_\ast]$,
        \begin{equation} \label{proof_choice_varepsilon_1}
            | \varphi_1(t_\varepsilon, x_\varepsilon(\cdot))
            - \varphi_1(\tau_\varepsilon, y_\varepsilon(\cdot))|
            + | \varphi_2(t_\varepsilon, x_\varepsilon(\cdot))
            - \varphi_2(\tau_\varepsilon, y_\varepsilon(\cdot))|
            \leq \frac{\varkappa}{4}.
        \end{equation}
        Let us show that, for every $\varepsilon \in (0, \varepsilon_\ast]$, the inclusions below are fulfilled:
        \begin{displaymath}
            t_\varepsilon, \tau_\varepsilon
            \in [0, T - \theta).
        \end{displaymath}
        In accordance with~\cref{proof_max}, we obtain
        \begin{align*}
            & \varphi_1(t_\varepsilon, x_\varepsilon(\cdot)) - \varphi_2(\tau_\varepsilon, y_\varepsilon(\cdot))
            \geq \Phi_\varepsilon (t_\varepsilon, x_\varepsilon(\cdot), \tau_\varepsilon, y_\varepsilon(\cdot)) \\[0.15em]
            & \hphantom{\varphi_1(t_\varepsilon, x_\varepsilon(\cdot)) - \varphi_2(\tau_\varepsilon, y_\varepsilon(\cdot))}
            \geq \Phi_\varepsilon (t, x(\cdot), t, x(\cdot)) \\[0.15em]
            & \hphantom{\varphi_1(t_\varepsilon, x_\varepsilon(\cdot)) - \varphi_2(\tau_\varepsilon, y_\varepsilon(\cdot))}
            = \varphi_1(t, x(\cdot)) - \varphi_2(t, x(\cdot)) - 2 (T - t) \zeta
        \end{align*}
        for all $(t, x(\cdot)) \in [0, T] \times X_k$, wherefrom, by the definition of $\varkappa$ and $\zeta$ (see~\cref{proof_choice_of_varkappa,proof_choice_of_zeta}), it follows that
        \begin{displaymath}
            \varphi_1(t_\varepsilon, x_\varepsilon(\cdot)) - \varphi_2(\tau_\varepsilon, y_\varepsilon(\cdot))
            \geq \varkappa - 2 T \zeta
            = \frac{\varkappa}{2}.
        \end{displaymath}
        Then, taking into account that $\varphi_1(T, x_\varepsilon(\cdot)) = \sigma(x_\varepsilon(\cdot)) = \varphi_2(T, x_\varepsilon(\cdot))$ due to~\cref{boundary_condition}, we derive
        \begin{align*}
            & |\varphi_1(t_\varepsilon, x_\varepsilon(\cdot)) - \varphi_1(T, x_\varepsilon(\cdot))|
            + |\varphi_2(T, x_\varepsilon(\cdot)) - \varphi_2(t_\varepsilon, x_\varepsilon(\cdot))| \\[0.2em]
            & \quad + |\varphi_2(t_\varepsilon, x_\varepsilon(\cdot)) - \varphi_2(\tau_\varepsilon, y_\varepsilon(\cdot))|
            \geq \frac{\varkappa}{2}.
        \end{align*}
        Therefore, in view of the choice of $\varepsilon_\ast$ (see~\cref{proof_choice_varepsilon_1}), the estimate
        \begin{displaymath}
            |\varphi_1(t_\varepsilon, x_\varepsilon(\cdot)) - \varphi_1(T, x_\varepsilon(\cdot))|
            + |\varphi_2(T, x_\varepsilon(\cdot)) - \varphi_2(t_\varepsilon, x_\varepsilon(\cdot))|
            \geq \frac{\varkappa}{4}
        \end{displaymath}
        holds, which, owing to the choice of $\theta$ (see~\cref{proof_choice_of_theta}), implies that $t_\varepsilon < T - \theta$.
        The inequality $\tau_\varepsilon < T - \theta$ can be verified in a similar way.

        Now, for every $\varepsilon \in (0, \varepsilon_\ast]$, consider a functional $\psi_1 \colon [0, T] \times AC^\alpha \to \mathbb{R}$ given by
        \begin{displaymath}
            \psi_1(t, x(\cdot))
            \triangleq \varphi_2(\tau_\varepsilon, y_\varepsilon(\cdot)) + (2 T - t - \tau_\varepsilon) \zeta
            + \frac{(t - \tau_\varepsilon)^2}{\varepsilon^{\frac{3}{\alpha}}}
            + \frac{\mu_\varepsilon^{(\tau_\varepsilon, y_\varepsilon(\cdot))}(t, x(\cdot))}{\varepsilon}
        \end{displaymath}
        for all $(t, x(\cdot)) \in [0, T] \times AC^\alpha$, where the functional $\mu_\varepsilon^{(\tau_\varepsilon, y_\varepsilon(\cdot))}$ is defined according to~\cref{psi_varepsilon}.
        Applying~\cref{lemma_ci_smoothness}, we obtain that the functional $\psi_1$ is $ci$-smooth of the order $\alpha$ and its $ci$-derivatives of the order $\alpha$ are as follows:
        \begin{displaymath}
            \partial_t^\alpha \psi_1(t, x(\cdot))
            = - \zeta + \frac{2 (t - \tau_\varepsilon)}{\varepsilon^{\frac{3}{\alpha}}},
            \quad \nabla^\alpha \psi_1(t, x(\cdot))
            = \frac{\nabla^\alpha \mu_\varepsilon^{(\tau_\varepsilon, y_\varepsilon(\cdot))} (t, x(\cdot))}{\varepsilon}
        \end{displaymath}
        for all $(t, x(\cdot)) \in [0, T) \times AC^\alpha$.
        In addition, by construction, we have
        \begin{align*}
            & \varphi_1(t, x(\cdot)) - \psi_1(t, x(\cdot))
            = \Phi_\varepsilon (t, x(\cdot), \tau_\varepsilon, y_\varepsilon(\cdot)) \\[0.15em]
            & \hphantom{\varphi_1(t, x(\cdot)) - \psi_1(t, x(\cdot))}
            \leq \Phi_\varepsilon (t_\varepsilon, x_\varepsilon(\cdot), \tau_\varepsilon, y_\varepsilon(\cdot)) \\[0.15em]
            & \hphantom{\varphi_1(t, x(\cdot)) - \psi_1(t, x(\cdot))}
            = \varphi_1(t_\varepsilon, x_\varepsilon(\cdot)) - \psi_1(t_\varepsilon, x_\varepsilon(\cdot))
        \end{align*}
        for all $(t, x(\cdot)) \in [0, T] \times X_k$.
        Hence, since the functional $\varphi_1$ possesses property $(\rm V_-)$ and the inequality $t_\varepsilon < T$ is valid, we conclude that
        \begin{equation} \label{proof_HJ_inequality_x}
            - \zeta + \frac{2 (t_\varepsilon - \tau_\varepsilon)}{\varepsilon^{\frac{3}{\alpha}}}
            + H \biggl( t_\varepsilon, x_\varepsilon(t_\varepsilon),
            \frac{\nabla^\alpha \mu_\varepsilon^{(\tau_\varepsilon, y_\varepsilon(\cdot))} (t_\varepsilon, x_\varepsilon(\cdot))}{\varepsilon} \biggr)
            \geq 0.
        \end{equation}
        On the other hand, define a functional $\psi_2 \colon [0, T] \times AC^\alpha \to \mathbb{R}$ by
        \begin{displaymath}
            \psi_2(\tau, y(\cdot))
            \triangleq \varphi_1(t_\varepsilon, x_\varepsilon(\cdot)) - (2 T - t_\varepsilon - \tau) \zeta
            - \frac{(t_\varepsilon - \tau)^2}{\varepsilon^{\frac{3}{\alpha}}}
            - \frac{\mu_\varepsilon^{(t_\varepsilon, x_\varepsilon(\cdot))} (\tau, y(\cdot))}{\varepsilon}
        \end{displaymath}
        for all $(\tau, y(\cdot)) \in [0, T] \times AC^\alpha$.
        The functional $\psi_2$ is $ci$-smooth of the order $\alpha$ and
        \begin{displaymath}
            \partial_t^\alpha \psi_2(\tau, y(\cdot))
            = \zeta + \frac{2 (t_\varepsilon - \tau)}{\varepsilon^{\frac{3}{\alpha}}},
            \quad \nabla^\alpha \psi_2(\tau, y(\cdot))
            = - \frac{\nabla^\alpha \mu_\varepsilon^{(t_\varepsilon, x_\varepsilon(\cdot))} (\tau, y(\cdot))}{\varepsilon}
        \end{displaymath}
        for all $(\tau, y(\cdot)) \in [0, T) \times AC^\alpha$.
        Moreover, taking the second equality in~\cref{lemma_nu_properties_item_a} into account, we get
        \begin{align*}
            & \psi_2(\tau, y(\cdot)) - \varphi_2(\tau, y(\cdot))
            = \Phi_\varepsilon (t_\varepsilon, x_\varepsilon(\cdot), \tau, y(\cdot)) \\[0.15em]
            & \hphantom{\psi_2(\tau, y(\cdot)) - \varphi_2(\tau, y(\cdot))}
            \leq \Phi_\varepsilon (t_\varepsilon, x_\varepsilon(\cdot), \tau_\varepsilon, y_\varepsilon(\cdot)) \\[0.15em]
            & \hphantom{\psi_2(\tau, y(\cdot)) - \varphi_2(\tau, y(\cdot))}
            = \psi_2(\tau_\varepsilon, y_\varepsilon(\cdot)) - \varphi_2(\tau_\varepsilon, y_\varepsilon(\cdot))
        \end{align*}
        for all $(\tau, y(\cdot)) \in [0, T] \times X_k$.
        Therefore, due to property $(\rm V_+)$ of the functional $\varphi_2$ and the inequality $\tau_\varepsilon < T$, we have
        \begin{equation} \label{proof_HJ_inequality_y}
            \zeta + \frac{2 (t_\varepsilon - \tau_\varepsilon)}{\varepsilon^{\frac{3}{\alpha}}}
            + H \biggl( \tau_\varepsilon, y_\varepsilon(\tau_\varepsilon),
            - \frac{\nabla^\alpha \mu_\varepsilon^{(t_\varepsilon, x_\varepsilon(\cdot))} (\tau_\varepsilon, y_\varepsilon(\cdot))}{\varepsilon} \biggr)
            \leq 0.
        \end{equation}
        As a result, we derive from~\cref{proof_HJ_inequality_x,proof_HJ_inequality_y} that, for every $\varepsilon \in (0, \varepsilon_\ast]$,
        \begin{align}
            & 2 \zeta
            \leq H \biggl( t_\varepsilon, x_\varepsilon(t_\varepsilon),
            \frac{\nabla^\alpha \mu_\varepsilon^{(\tau_\varepsilon, y_\varepsilon(\cdot))} (t_\varepsilon, x_\varepsilon(\cdot))}{\varepsilon} \biggr)
            \label{proof_final_difference} \\[0.1em]
            & \hphantom{2 \zeta} \quad
            - H \biggl( \tau_\varepsilon, y_\varepsilon(\tau_\varepsilon),
            - \frac{\nabla^\alpha \mu_\varepsilon^{(t_\varepsilon, x_\varepsilon(\cdot))} (\tau_\varepsilon, y_\varepsilon(\cdot))}{\varepsilon} \biggr).
            \nonumber
        \end{align}

        Let $R > 0$ be such that $\|x(\cdot)\|_\infty \leq R$ for all $x(\cdot) \in X_k$ and let $C_3$ and $C_4$ be the numbers that, according to~\cref{lemma_gradient_properties}, correspond to the chosen $\theta$ (see~\cref{proof_choice_of_theta}).
        Then, for every $\varepsilon \in (0, \varepsilon_\ast]$, relying on~\cref{difference_of_gradients,proof_t_varepsilon-tau_varepsilon}, we obtain
        \begin{align*}
            & \biggl\| \frac{\nabla^\alpha \mu_\varepsilon^{(\tau_\varepsilon, y_\varepsilon(\cdot))} (t_\varepsilon, x_\varepsilon(\cdot))}{\varepsilon}
            + \frac{\nabla^\alpha \mu_\varepsilon^{(t_\varepsilon, x_\varepsilon(\cdot))} (\tau_\varepsilon, y_\varepsilon(\cdot))}{\varepsilon} \biggr\| \\[0.1em]
            & \quad \leq \frac{C_4 \bigl( \varepsilon^{\frac{2}{q - 1}} + 4 R^2 \bigr)^{\frac{q - 1}{2}} |t_\varepsilon - \tau_\varepsilon|^\alpha}{\varepsilon}
            \leq K_4 \varepsilon^{\frac{1}{2}},
        \end{align*}
        where $K_4 \triangleq C_4 \bigl( \varepsilon_\ast^{\frac{2}{q - 1}} + 4 R^2 \bigr)^{\frac{q - 1}{2}} K_1^{\frac{\alpha}{2}}$, and, consequently, in view of property (jj) of the Hamiltonian $H$, we have
        \begin{align}
            & \biggl| H \biggl( \tau_\varepsilon, y_\varepsilon(\tau_\varepsilon),
            \frac{\nabla^\alpha \mu_\varepsilon^{(\tau_\varepsilon, y_\varepsilon(\cdot))} (t_\varepsilon, x_\varepsilon(\cdot))}{\varepsilon} \biggr)
            \label{contradiction_first} \\[0.2em]
            & \quad - H \biggl( \tau_\varepsilon, y_\varepsilon(\tau_\varepsilon),
            - \frac{\nabla^\alpha \mu_\varepsilon^{(t_\varepsilon, x_\varepsilon(\cdot))} (\tau_\varepsilon, y_\varepsilon(\cdot))}{\varepsilon} \biggr) \biggr|
            \leq c_\ast (1 + R) K_4 \varepsilon^{\frac{1}{2}}.
            \nonumber
        \end{align}
        Further, applying~\cref{gradient_bound,proof_nu_varepsilon}, we get
        \begin{displaymath}
            \biggl\| \frac{\nabla^\alpha \mu_\varepsilon^{(\tau_\varepsilon, y_\varepsilon(\cdot))}(t_\varepsilon, x_\varepsilon(\cdot))}{\varepsilon} \biggr\|
            \leq \frac{C_3 \bigl( \nu_\varepsilon(t_\varepsilon, x_\varepsilon(\cdot), \tau_\varepsilon, y_\varepsilon(\cdot)) + C_1 \varepsilon^{\frac{q}{q - 1}} \bigr)^{\frac{q - 1}{q}}}{\varepsilon}
            \leq K_5
        \end{displaymath}
        for all $\varepsilon \in (0, \varepsilon_\ast]$, where $K_5 \triangleq C_3 K_3$.
        Hence, since the Hamiltonian $H$ is uniformly continuous on the compact set $[0, T] \times B_R \times B_{K_5}$ by property (j) and relations~\cref{proof_t_varepsilon-tau_varepsilon_limit,proof_x_varepsilon-y_varepsilon_limit} hold, we derive
        \begin{displaymath}
            \biggl| H \biggl( t_\varepsilon, x_\varepsilon(t_\varepsilon),
            \frac{\nabla^\alpha \mu_\varepsilon^{(\tau_\varepsilon, y_\varepsilon(\cdot))} (t_\varepsilon, x_\varepsilon(\cdot))}{\varepsilon} \biggr)
            - H \biggl( \tau_\varepsilon, y_\varepsilon(\tau_\varepsilon),
            \frac{\nabla^\alpha \mu_\varepsilon^{(\tau_\varepsilon, y_\varepsilon(\cdot))} (t_\varepsilon, x_\varepsilon(\cdot))}{\varepsilon} \biggr) \biggr|
            \to 0
        \end{displaymath}
        as $\varepsilon \to 0^+$.
        From this relation and estimate~\cref{contradiction_first}, it follows that the right-hand side of inequality~\cref{proof_final_difference} tends to $0$ as $\varepsilon \to 0^+$.
        Thus, we conclude that $2 \zeta \leq 0$ and obtain a contradiction to~\cref{proof_choice_of_zeta}.
        The proof is complete.

\appendix

\section{Proofs of~\cref{subsection_Lemmas}}
\label{Appendix}

    \begin{proof}[Proof of~\cref{lemma_nu_is_continuous}]
        Fix $\varepsilon > 0$.
        Directly from the definition of the functional $\nu_\varepsilon$, it follows that this functional is non-negative and that equalities~\cref{lemma_nu_properties_item_a} are valid, while equality~\cref{lemma_nu_properties_item_b} is a consequence of relation~\cref{semigroup_property}.
        Thus, it remains to verify continuity of the functional $\nu_\varepsilon$.
        Let $(t_0, x_0(\cdot))$, $(\tau_0, y_0(\cdot)) \in [0, T] \times AC^\alpha$ and let sequences $\{(t_i, x_i(\cdot))\}_{i \in \mathbb{N}}$, $\{(\tau_i, y_i(\cdot))\}_{i \in \mathbb{N}} \subset [0, T] \times AC^\alpha$ be such that, as  $i \to \infty$,
        \begin{displaymath}
            |t_i - t_0| + \|x_i(\cdot) - x_0(\cdot)\|_\infty
            \to 0,
            \quad |\tau_i - \tau_0| + \|y_i(\cdot) - y_0(\cdot)\|_\infty \to 0.
        \end{displaymath}
        For every $i \in \mathbb{N} \cup \{0\}$, consider the function
        \begin{displaymath}
            b_i(\xi)
            \triangleq \bigl( \varepsilon^{\frac{2}{q - 1}} + \|a(\xi \mid t_i, x_i(\cdot)) - a(\xi \mid \tau_i, y_i(\cdot))\|^2 \bigr)^{\frac{q}{2}},
            \quad \xi \in [0, T].
        \end{displaymath}
        Then, we have
        \begin{displaymath}
            \nu_\varepsilon(t_i, x_i(\cdot), \tau_i, y_i(\cdot))
            = b_i(T) + \int_{0}^{T} \frac{b_i(\xi)}{(T - \xi)^{(1 - \alpha - \beta) q}} \, \rd \xi - C_1 \varepsilon^{\frac{q}{q - 1}}
            \quad \forall i \in \mathbb{N} \cup \{0\}.
        \end{displaymath}
        Hence, since $\|b_i(\cdot) - b_0(\cdot)\|_\infty \to 0$ as $i \to \infty$ by continuity of mapping~\cref{a_mapping}, we find that $\nu_\varepsilon (t_i, x_i(\cdot), \tau_i, y_i(\cdot)) \to \nu_\varepsilon (t_0, x_0(\cdot), \tau_0, y_0(\cdot))$ as $i \to \infty$ and complete the proof.
    \end{proof}

    \begin{proof}[Proof of~\cref{lemma_Lipschitz_via_nu}]
        Note that $\beta q / (q - 1) < 1$ by the condition $\beta < \alpha / 2$ and put
        \begin{equation} \label{proof_f_C_2}
            C_2
            \triangleq 1 + \frac{T^{\frac{q - 1}{q} - \beta}}{\bigl( 1 - \frac{\beta q}{q - 1} \bigr)^{\frac{q - 1}{q}}}.
        \end{equation}
        For any $\varepsilon > 0$ and any $(t, x(\cdot))$, $(\tau, y(\cdot)) \in [0, T] \times AC^\alpha$, according to~\cref{nu}, we get
        \begin{align}
            & \|a(T \mid t, x(\cdot)) - a(T \mid \tau, y(\cdot))\| \label{proof_f_1} \\[0.1em]
            & \quad \leq \Bigl( \bigl( \varepsilon^{\frac{2}{q - 1}} + \|a(T \mid t, x(\cdot)) - a(T \mid \tau, y(\cdot))\|^2 \bigr)^{\frac{q}{2}} \Bigr)^{\frac{1}{q}} \nonumber \\[0.1em]
            & \quad \leq \bigl( \nu_\varepsilon(t, x(\cdot), \tau, y(\cdot)) + C_1 \varepsilon^{\frac{q}{q - 1}} \bigr)^{\frac{1}{q}}
            \nonumber
        \end{align}
        and, by H\"{o}lder's inequality,
        \begin{align}
            & \int_{0}^{T} \frac{\|a(\xi \mid t, x(\cdot)) - a(\xi \mid \tau, y(\cdot))\|}{(T - \xi)^{1 - \alpha}} \, \rd \xi
            \label{proof_f_2} \\[0.1em]
            & \hspace*{-0.5em} \leq \int_{0}^{T} \frac{\bigl( \varepsilon^{\frac{2}{q - 1}} + \|a(\xi \mid t, x(\cdot)) - a(\xi \mid \tau, y(\cdot))\|^2 \bigr)^{\frac{1}{2}}}
            {(T - \xi)^{1 - \alpha}} \, \rd \xi
            \nonumber \\[0.1em]
            & \hspace*{-0.5em} \leq \biggl( \int_{0}^{T} \frac{\rd \xi}{(T - \xi)^{\frac{\beta q}{q - 1}}} \biggr)^{\frac{q - 1}{q}}
            \biggl( \int_{0}^{T} \frac{\bigl( \varepsilon^{\frac{2}{q - 1}} + \|a(\xi \mid t, x(\cdot)) - a(\xi \mid \tau, y(\cdot))\|^2 \bigr)^{\frac{q}{2}}}
            {(T - \xi)^{(1 - \alpha - \beta) q}} \, \rd \xi \biggr)^{\frac{1}{q}}
            \nonumber \\[0.1em]
            & \hspace*{-0.5em} \leq \frac{T^{\frac{q - 1}{q} - \beta}}{\bigl( 1 - \frac{\beta q}{q - 1} \bigr)^{\frac{q - 1}{q}}}
            \bigl( \nu_\varepsilon(t, x(\cdot), \tau, y(\cdot)) + C_1 \varepsilon^{\frac{q}{q - 1}} \bigr)^{\frac{1}{q}}.
            \nonumber
        \end{align}
        From~\cref{proof_f_1,proof_f_2}, we derive estimate~\cref{Lipschitz_via_nu_main} with $C_2$ given by~\cref{proof_f_C_2}.
    \end{proof}

    \begin{proof}[Proof of~\cref{lemma_convergene}]
        Fix a compact set $X \subset AC^\alpha$ and, for every $\varepsilon > 0$, take $(t_\varepsilon, x_\varepsilon(\cdot))$, $(\tau_\varepsilon, y_\varepsilon(\cdot)) \in [0, T] \times X$ such that $\nu_\varepsilon(t_\varepsilon, x_\varepsilon(\cdot), \tau_\varepsilon, y_\varepsilon(\cdot)) \to 0$ as $\varepsilon \to 0^+$.
        Arguing by contradiction, assume that relation~\cref{convergence} does not hold.
        Then, there exists $\varkappa > 0$ such that, for any $i \in \mathbb{N}$, we can choose $\varepsilon_i \in (0, 1 / i]$ from the condition
        \begin{equation} \label{supposition}
            \|a(\cdot \mid t_{\varepsilon_i}, x_{\varepsilon_i}(\cdot)) - a(\cdot \mid \tau_{\varepsilon_i}, y_{\varepsilon_i}(\cdot))\|_\infty
            \geq \varkappa.
        \end{equation}
        In view of compactness of the set $[0, T] \times X$, we can suppose that the sequences $\{(t_{\varepsilon_i}, x_{\varepsilon_i}(\cdot))\}_{i \in \mathbb{N}}$, $\{(\tau_{\varepsilon_i}, y_{\varepsilon_i}(\cdot))\}_{i \in \mathbb{N}}$ converge respectively to some points $(t_0, x_0(\cdot))$, $(\tau_0, y_0(\cdot)) \in [0, T] \times X$.
        Hence, due to continuity of mapping~\cref{a_mapping}, we have
        \begin{displaymath}
            \|a(\cdot \mid t_{\varepsilon_i}, x_{\varepsilon_i}(\cdot)) - a(\cdot \mid t_0, x_0(\cdot))\|_\infty
            \to 0,
            \quad \|a(\cdot \mid \tau_{\varepsilon_i}, y_{\varepsilon_i}(\cdot)) - a(\cdot \mid \tau_0, y_0(\cdot))\|_\infty
            \to 0
        \end{displaymath}
        as $i \to \infty$.
        Thus, on the one hand, it follows from~\cref{supposition} that
        \begin{equation} \label{supposition_2}
            \|a(\cdot \mid t_0, x_0(\cdot)) - a(\cdot \mid \tau_0, y_0(\cdot))\|_\infty
            \geq \varkappa.
        \end{equation}
        But, on the other hand, in accordance with~\cref{nu}, we get, for all $i \in \mathbb{N}$,
        \begin{align*}
            & \int_{0}^{T} \frac{\|a(\xi \mid t_{\varepsilon_i}, x_{\varepsilon_i}(\cdot)) - a(\xi \mid \tau_{\varepsilon_i}, y_{\varepsilon_i}(\cdot))\|^q}
            {(T - \xi)^{(1 - \alpha - \beta) q}} \, \rd \xi \\[0.1em]
            & \quad \leq \int_{0}^{T} \frac{\bigl( \varepsilon_i^{\frac{2}{q - 1}}
            + \|a(\xi \mid t_{\varepsilon_i}, x_{\varepsilon_i}(\cdot)) - a(\xi \mid \tau_{\varepsilon_i}, y_{\varepsilon_i}(\cdot))\|^2 \bigr)^{\frac{q}{2}}}
            {(T - \xi)^{(1 - \alpha - \beta) q}} \, \rd \xi \\[0.2em]
            & \quad \leq \nu_{\varepsilon_i}(t_{\varepsilon_i}, x_{\varepsilon_i}(\cdot), \tau_{\varepsilon_i}, y_{\varepsilon_i}(\cdot))
            + C_1 \varepsilon_i^{\frac{q}{q - 1}}.
        \end{align*}
        Therefore, since $\varepsilon_i \to 0^+$ and $\nu_{\varepsilon_i}(t_{\varepsilon_i}, x_{\varepsilon_i}(\cdot), \tau_{\varepsilon_i}, y_{\varepsilon_i}(\cdot)) \to 0$ as $i \to \infty$, we obtain
        \begin{displaymath}
            \int_{0}^{T} \frac{\|a(\xi \mid t_{\varepsilon_i}, x_{\varepsilon_i}(\cdot)) - a(\xi \mid \tau_{\varepsilon_i}, y_{\varepsilon_i}(\cdot))\|^q}
            {(T - \xi)^{(1 - \alpha - \beta) q}} \, \rd \xi
            \to 0
            \text{ as } i \to \infty.
        \end{displaymath}
        Consequently, it holds that
        \begin{displaymath}
            \int_{0}^{T} \frac{\|a(\xi \mid t_0, x_0(\cdot)) - a(\xi \mid \tau_0, y_0(\cdot))\|^q}
            {(T - \xi)^{(1 - \alpha - \beta) q}} \, \rd \xi
            = 0,
        \end{displaymath}
        wherefrom, owing to continuity of the functions $a(\cdot \mid t_0, x_0(\cdot))$ and $a(\cdot \mid \tau_0, y_0(\cdot))$, we derive the equality $\|a(\cdot \mid t_0, x_0(\cdot)) - a(\cdot \mid \tau_0, y_0(\cdot))\|_\infty = 0$, contradicting~\cref{supposition_2}.
    \end{proof}

    Before proceeding with the proofs of~\cref{lemma_ci_smoothness,lemma_gradient_properties}, we recall that, for any $\gamma \in (0, 1)$ and any $t \in [0, T)$, the equality below is valid (see, e.g.,~\cite[Example~2.1]{Diethelm_2010}):
    \begin{equation} \label{Beta}
        \int_{t}^{T} \frac{\rd \xi}{(\xi - t)^{1 - \gamma} (T - \xi)^{(1 - \alpha - \beta) q}}
        = B(\gamma, 1 - (1 - \alpha - \beta) q) (T - t)^{\gamma - (1 - \alpha - \beta) q},
    \end{equation}
    where $B$ is the beta-function.
    Hence, in particular, for every $\theta \in (0, T)$, there exists a number $A_1 > 0$ such that
    \begin{align}
        & \bigg| \int_{t}^{T} \frac{\rd \xi}{(\xi - t)^{1 - \alpha} (T - \xi)^{(1 - \alpha - \beta) q}}
        - \int_{\tau}^{T} \frac{\rd \xi}{(\xi - \tau)^{1 - \alpha} (T - \xi)^{(1 - \alpha - \beta) q}} \bigg| \label{tilde_C_1} \\[0.5em]
        & \quad \leq A_1 |t - \tau| \nonumber
    \end{align}
    for all $t$, $\tau \in [0, T - \theta]$.

    \begin{proof}[Proof of~\cref{lemma_ci_smoothness}]
        Let $\varepsilon > 0$ and $(\tau_\ast, y_\ast(\cdot)) \in [0, T] \times AC^\alpha$ be fixed.
        Note that continuity of the functional $\mu_\varepsilon^{(\tau_\ast, y_\ast(\cdot))}$ follows directly from continuity of the functional $\nu_\varepsilon$ (see~\cref{lemma_nu_is_continuous}).

        Let us take $(t, x(\cdot)) \in [0, T) \times AC^\alpha$ and show that the functional $\mu_\varepsilon^{(\tau_\ast, y_\ast(\cdot))}$ is $ci$-differentiable of the order $\alpha$ at $(t, x(\cdot))$ and that equalities~\cref{lemma_nu_properties_item_c_derivative_t,lemma_nu_properties_item_c_derivative_x} hold.
        To this end, we need to consider a function $y(\cdot) \in Y(t, x(\cdot))$ (see~\cref{Y}) and verify that
        \begin{align}
            & \mu_\varepsilon^{(\tau_\ast, y_\ast(\cdot))} (\tau, y(\cdot)) - \mu_\varepsilon^{(\tau_\ast, y_\ast(\cdot))} (t, x(\cdot))
            \label{lemma_ci_smoothness_definition_of_ci_differentiability} \\[0.25em]
            & - \frac{q \langle a(T \mid t, x(\cdot)) - a_\ast(T), z(\tau) \rangle}
            {\Gamma(\alpha) \bigl( \varepsilon^{\frac{2}{q - 1}} + \|a(T \mid t, x(\cdot)) - a_\ast(T)\|^2 \bigr)^{1 - \frac{q}{2}}
            (T - t)^{1 - \alpha}}
            \nonumber \\[0.25em]
            & - \int_{t}^{T} \frac{q \langle a(\xi \mid t, x(\cdot)) - a_\ast(\xi), z(\tau) \rangle}
            {\Gamma(\alpha) \bigl( \varepsilon^{\frac{2}{q - 1}} + \|a(\xi \mid t, x(\cdot)) - a_\ast(\xi)\|^2 \bigr)^{1 - \frac{q}{2}}
            (\xi - t)^{1 - \alpha} (T - \xi)^{(1 - \alpha - \beta) q}} \, \rd \xi
            \nonumber \\[0.3em]
            & \quad
            = o(\tau - t)
            \nonumber
        \end{align}
        for all $\tau \in (t, T)$, where we denote $a_\ast(\cdot) \triangleq a(\cdot \mid \tau_\ast, y_\ast(\cdot))$ and
        \begin{displaymath}
            z(\tau)
            \triangleq \int_{t}^{\tau} (^C D^\alpha y)(\xi) \, \rd \xi,
            \quad \tau \in [t, T].
        \end{displaymath}
        Since, for any $\tau \in [t, T]$ and any $\xi \in [0, t]$, in accordance with~\cref{a}, it holds that $a(\xi \mid \tau, y(\cdot)) = y(\xi) = x(\xi) = a(\xi \mid t, x(\cdot))$, we have
        \begin{align*}
            & \mu_\varepsilon^{(\tau_\ast, y_\ast(\cdot))} (\tau, y(\cdot)) - \mu_\varepsilon^{(\tau_\ast, y_\ast(\cdot))} (t, x(\cdot)) \\[0.25em]
            & \ \ = \bigl( \varepsilon^{\frac{2}{q - 1}} + \|a(T \mid \tau, y(\cdot)) - a_\ast(T)\|^2 \bigr)^{\frac{q}{2}}
            - \bigl( \varepsilon^{\frac{2}{q - 1}} + \|a(T \mid t, x(\cdot)) - a_\ast(T)\|^2 \bigr)^{\frac{q}{2}} \\[0.25em]
            & \ \ \ \
            + \int_{t}^{T} \frac{\bigl( \varepsilon^{\frac{2}{q - 1}} + \|a(\xi \mid \tau, y(\cdot)) - a_\ast(\xi)\|^2 \bigr)^{\frac{q}{2}}
            - \bigl( \varepsilon^{\frac{2}{q - 1}} + \|a(\xi \mid t, x(\cdot)) - a_\ast(\xi)\|^2 \bigr)^{\frac{q}{2}}}
            {(T - \xi)^{(1 - \alpha - \beta) q}} \, \rd \xi
        \end{align*}
        for all $\tau \in [t, T]$.
        Consequently, in order to obtain~\cref{lemma_ci_smoothness_definition_of_ci_differentiability}, it suffices to fix $\theta \in (0, T - t)$ and prove that there are numbers $A_2 > 0$ and $A_3 > 0$ such that, for any $\tau \in (t, T - \theta]$,
        \begin{align}
            & \biggl| \bigl( \varepsilon^{\frac{2}{q - 1}} + \|a(T \mid \tau, y(\cdot)) - a_\ast(T)\|^2 \bigr)^{\frac{q}{2}}
            - \bigl( \varepsilon^{\frac{2}{q - 1}} + \|a(T \mid t, x(\cdot)) - a_\ast(T)\|^2 \bigr)^{\frac{q}{2}}
            \label{lemma_proof_ci_differentiability_first_term} \\[0.25em]
            & \quad - \frac{q \langle a(T \mid t, x(\cdot)) - a_\ast(T), z(\tau) \rangle}
            {\Gamma(\alpha) \bigl( \varepsilon^{\frac{2}{q - 1}} + \|a(T \mid t, x(\cdot)) - a_\ast(T)\|^2 \bigr)^{1 - \frac{q}{2}} (T - t)^{1 - \alpha}} \biggr|
            \leq A_2 (t - \tau)^2
            \nonumber
        \end{align}
        and
        \begin{align}
            & \biggl| \int_{t}^{T} \frac{\bigl( \varepsilon^{\frac{2}{q - 1}} + \|a(\xi \mid \tau, y(\cdot)) - a_\ast(\xi)\|^2 \bigr)^{\frac{q}{2}}
            - \bigl( \varepsilon^{\frac{2}{q - 1}} + \|a(\xi \mid t, x(\cdot)) - a_\ast(\xi)\|^2 \bigr)^{\frac{q}{2}}}
            {(T - \xi)^{(1 - \alpha - \beta) q}} \, \rd \xi \label{lemma_proof_ci_differentiability_second_term} \\[0.3em]
            & \quad - \int_{t}^{T} \frac{q \langle a(\xi \mid t, x(\cdot)) - a_\ast(\xi), z(\tau) \rangle}
            {\Gamma(\alpha) \bigl( \varepsilon^{\frac{2}{q - 1}} + \|a(\xi \mid t, x(\cdot)) - a_\ast(\xi)\|^2 \bigr)^{1 - \frac{q}{2}}
            (\xi - t)^{1 - \alpha} (T - \xi)^{(1 - \alpha - \beta) q}} \, \rd \xi \biggr|
            \nonumber \\[0.4em]
            & \qquad
            \leq A_3 (\tau - t)^{1 + \alpha}.
            \nonumber
        \end{align}

        Let us derive some preliminary estimates.
        Choose $R > 0$ satisfying the condition $\|y(\cdot)\|_\infty + \|a_\ast(\cdot)\|_\infty \leq R$.
        Then, by Taylor's expansion, there exists $A_4 > 0$ such that
        \begin{displaymath}
            \biggl| \bigl( \varepsilon^{\frac{2}{q - 1}} + \|r\|^2 \bigr)^{\frac{q}{2}} - \bigl( \varepsilon^{\frac{2}{q - 1}} + \|r_0\|^2 \bigr)^{\frac{q}{2}}
            - \frac{q \langle r_0, r - r_0 \rangle }{\bigl( \varepsilon^{\frac{2}{q - 1}} + \|r_0\|^2 \bigr)^{1 - \frac{q}{2}}} \biggr|
            \leq A_4 \|r - r_0\|^2
        \end{displaymath}
        for all $r$, $r_0 \in B_R$.
        Therefore, taking into account that
        \begin{displaymath}
            \|a(\cdot \mid \tau, y(\cdot)) - a_\ast(\cdot)\|_\infty
            \leq \max_{\xi \in [0, \tau]} \|y(\xi)\| + \|a_\ast(\cdot)\|_\infty
            \leq R
            \quad \forall \tau \in [t, T]
        \end{displaymath}
        by~\cref{a_Lipschitz} and that $a(\cdot \mid t, x(\cdot)) = a(\cdot \mid t, y(\cdot))$ due to~\cref{a_non-anticipative}, we get, for all $\tau$, $\xi \in [t, T]$,
        \begin{align}
            & \bigg| \bigl( \varepsilon^{\frac{2}{q - 1}} + \|a(\xi \mid \tau, y(\cdot)) - a_\ast(\xi)\|^2 \bigr)^{\frac{q}{2}}
            - \bigl( \varepsilon^{\frac{2}{q - 1}} + \|a(\xi \mid t, x(\cdot)) - a_\ast(\xi)\|^2 \bigr)^{\frac{q}{2}}
            \label{lemma_proof_ci_differentiability_basic_xi} \\[0.2em]
            & \quad - \frac{q \langle a(\xi \mid t, x(\cdot)) - a_\ast(\xi), a(\xi \mid \tau, y(\cdot)) - a(\xi \mid t, x(\cdot)) \rangle }{\bigl( \varepsilon^{\frac{2}{q - 1}} + \|a(\xi \mid t, x(\cdot)) - a_\ast(\xi)\|^2 \bigr)^{1 - \frac{q}{2}}} \bigg|
            \nonumber \\[0.4em]
            & \qquad
            \leq A_4 \| a(\xi \mid \tau, y(\cdot)) - a(\xi \mid t, x(\cdot)) \|^2.
            \nonumber
        \end{align}

        In addition, consider $M > 0$ such that $\|(^C D^\alpha y)(\xi)\| \leq M$ for a.e. $\xi \in [t, T]$.
        Then, for every $\tau \in [t, T]$, in view of~\cref{I^alpha_D^alpha,a}, we derive, for all $\xi \in [t, \tau]$,
        \begin{align}
            & \|a(\xi \mid \tau, y(\cdot)) - a(\xi \mid t, x(\cdot))\|
            = \|y(\xi) - a(\xi \mid t, x(\cdot))\| \label{Delta_a_Holder_leq} \\[0.2em]
            & \hphantom{\|a(\xi \mid \tau, y(\cdot)) - a(\xi \mid t, x(\cdot))\|}
            = \frac{1}{\Gamma(\alpha)} \biggl\| \int_{t}^{\xi} \frac{(^C D^\alpha y)(\eta)}{(\xi - \eta)^{1 - \alpha}} \, \rd \eta \biggr\|
            \nonumber \\[0.2em]
            & \hphantom{\|a(\xi \mid \tau, y(\cdot)) - a(\xi \mid t, x(\cdot))\|}
            \leq \frac{M (\xi - t)^\alpha}{\Gamma(\alpha + 1)}
            \leq \frac{M (\tau - t)^\alpha}{\Gamma(\alpha + 1)}
            \nonumber
        \end{align}
        and, for all $\xi \in (\tau, T]$,
        \begin{align}
            & \|a(\xi \mid \tau, y(\cdot)) - a(\xi \mid t, x(\cdot))\|
            = \frac{1}{\Gamma(\alpha)} \biggl\| \int_{t}^{\tau} \frac{(^C D^\alpha y)(\eta)}{(\xi - \eta)^{1 - \alpha}} \, \rd \eta \biggr\|
            \label{Delta_a_Holder_geq} \\[0.2em]
            & \hphantom{\|a(\xi \mid \tau, y(\cdot)) - a(\xi \mid t, x(\cdot))\|}
            \leq \frac{M \bigl( (\xi - t)^\alpha - (\xi - \tau)^\alpha \bigr)}{\Gamma(\alpha + 1)}
            \leq \frac{M (\tau - t)^\alpha}{\Gamma(\alpha + 1)}
            \nonumber
        \end{align}
        and
        \begin{equation} \label{Delta_a_Lip_simple}
            \|a(\xi \mid \tau, y(\cdot)) - a(\xi \mid t, x(\cdot))\|
            \leq \frac{M (\tau - t)}{\Gamma(\alpha) (\xi - \tau)^{1 - \alpha}}.
        \end{equation}
        Moreover, based on the integration by parts formula, we obtain, for every $\xi \in (\tau, T]$,
        \begin{displaymath}
            a(\xi \mid \tau, y(\cdot)) - a(\xi \mid t, x(\cdot))
            =  \frac{z(\tau)}{\Gamma(\alpha) (\xi - \tau)^{1 - \alpha}}
            - \frac{1 - \alpha}{\Gamma(\alpha)} \int_{t}^{\tau} \frac{z(\eta)}{(\xi - \eta)^{2 - \alpha}} \, \rd \eta.
        \end{displaymath}
        Hence, owing to the estimate $\|z(\eta)\| \leq M (\eta - t)$ for all $\eta \in [t, T]$, we conclude that
        \begin{align}
            & \biggl\| a(\xi \mid \tau, y(\cdot)) - a(\xi \mid t, x(\cdot))
            - \frac{z(\tau)}{\Gamma(\alpha) (\xi - t)^{1 - \alpha}} \biggr\|
            \label{Delta_a_minus_z} \\[0.1em]
            & \quad \leq \biggl\| a(\xi \mid \tau, y(\cdot)) - a(\xi \mid t, x(\cdot))
            - \frac{z(\tau)}{\Gamma(\alpha) (\xi - \tau)^{1 - \alpha}} \biggr\|
            \nonumber \\[0.1em]
            & \qquad
            + \frac{M (\tau - t)}{\Gamma(\alpha)} \biggl( \frac{1}{(\xi - \tau)^{1 - \alpha}} - \frac{1}{(\xi - t)^{1 - \alpha}} \biggr)
            \nonumber \\[0.1em]
            & \quad \leq \frac{2 M (\tau - t)}{\Gamma(\alpha)} \biggl( \frac{1}{(\xi - \tau)^{1 - \alpha}} - \frac{1}{(\xi - t)^{1 - \alpha}} \biggr)
            \nonumber
        \end{align}
        for any $\xi \in (\tau, T]$.

        Now, let us fix $\tau \in (t, T - \theta]$.
        Denoting the left-hand side of inequality~\cref{lemma_proof_ci_differentiability_first_term} by $S_1$, due to~\cref{lemma_proof_ci_differentiability_basic_xi}, we get
        \begin{align}
            & S_1
            \leq A_4 \| a(T \mid \tau, y(\cdot)) - a(T \mid t, x(\cdot)) \|^2
            \label{lemma_proof_ci_differentiability_first_term_1} \\[0.2em]
            & \hphantom{S_1} \quad
            + q \bigl( \varepsilon^{\frac{2}{q - 1}} + R^2 \bigr)^{\frac{q - 1}{2}}
            \biggl\| a(T \mid \tau, y(\cdot)) - a(T \mid t, x(\cdot)) - \frac{z(\tau)}{\Gamma(\alpha) (T - t)^{1 - \alpha}} \biggr\|.
            \nonumber
        \end{align}
        According to~\cref{Delta_a_Lip_simple}, we have
        \begin{equation} \label{lemma_proof_ci_differentiability_first_term_2}
            \|a(T \mid \tau, y(\cdot)) - a(T \mid t, x(\cdot))\|
            \leq \frac{M (\tau - t)}{\Gamma(\alpha + 1) \theta^{1 - \alpha}}.
        \end{equation}
        Taking~\cref{Delta_a_minus_z} into account, we derive
        \begin{equation} \label{lemma_proof_ci_differentiability_first_term_3}
            \biggl\| a(T \mid \tau, y(\cdot)) - a(T \mid t, x(\cdot)) - \frac{z(\tau)}{\Gamma(\alpha) (T - t)^{1 - \alpha}} \biggr\|
            \leq \frac{2 M (1 - \alpha) (\tau - t)^2}{\Gamma(\alpha) \theta^{2 - \alpha}}.
        \end{equation}
        From~\cref{lemma_proof_ci_differentiability_first_term_1,lemma_proof_ci_differentiability_first_term_2,lemma_proof_ci_differentiability_first_term_3}, it follows that there is a number $A_2 > 0$ such that~\cref{lemma_proof_ci_differentiability_first_term} is valid.

        Let $S_2$ denote the left-hand side of inequality~\cref{lemma_proof_ci_differentiability_second_term}.
        Then, by~\cref{lemma_proof_ci_differentiability_basic_xi}, we obtain
        \begin{align}
            & S_2
            \leq A_4 \int_{t}^{T} \frac{\| a(\xi \mid \tau, y(\cdot)) - a(\xi \mid t, x(\cdot)) \|^2}{(T - \xi)^{(1 - \alpha - \beta) q}} \, \rd \xi
            \label{lemma_proof_ci_differentiability_second_term_1} \\[0.2em]
            & \hphantom{S_2 \ \ \,}
            + q \bigl( \varepsilon^{\frac{2}{q - 1}} + R^2 \bigr)^{\frac{q - 1}{2}}
            \int_{t}^{T} \frac{\bigl\| a(\xi \mid \tau, y(\cdot)) - a(\xi \mid t, x(\cdot))
            - \frac{z(\tau)}{\Gamma(\alpha) (\xi - t)^{1 - \alpha}} \bigr\|}
            {(T - \xi)^{(1 - \alpha - \beta) q}} \, \rd \xi.
            \nonumber
        \end{align}
        Owing to~\cref{Delta_a_Holder_leq}, we get
        \begin{equation} \label{lemma_proof_ci_differentiability_second_term_2}
            \int_{t}^{\tau} \frac{\| a(\xi \mid \tau, y(\cdot)) - a(\xi \mid t, x(\cdot)) \|^2}{(T - \xi)^{(1 - \alpha - \beta) q}} \, \rd \xi
            \leq \frac{M^2 (\tau - t)^{1 + 2 \alpha}}{(\Gamma (\alpha + 1))^2 \theta^{(1 - \alpha - \beta) q}}
        \end{equation}
        and
        \begin{align}
            & \int_{t}^{\tau} \frac{\bigl\| a(\xi \mid \tau, y(\cdot)) - a(\xi \mid t, x(\cdot))
            - \frac{z(\tau)}{\Gamma(\alpha)(\xi - t)^{1 - \alpha}} \bigr\|}{(T - \xi)^{(1 - \alpha - \beta) q}} \, \rd \xi
            \label{lemma_proof_ci_differentiability_second_term_3} \\[0.2em]
            & \quad \leq \int_{t}^{\tau} \frac{\| a(\xi \mid \tau, y(\cdot)) - a(\xi \mid t, x(\cdot))\|}{(T - \xi)^{(1 - \alpha - \beta) q}} \, \rd \xi
            \nonumber \\[0.2em]
            & \qquad + \frac{M (\tau - t)}{\Gamma(\alpha)} \int_{t}^{\tau} \frac{\rd \xi}{(\xi - t)^{1 - \alpha} (T - \xi)^{(1 - \alpha - \beta) q}}
            \nonumber \\[0.2em]
            & \quad \leq \frac{2 M (\tau - t)^{1 + \alpha}}{\Gamma (\alpha + 1) \theta^{(1 - \alpha - \beta) q}}.
            \nonumber
        \end{align}
        In view of~\cref{Beta,Delta_a_Holder_geq,Delta_a_Lip_simple}, we have
        \begin{align}
            & \int_{\tau}^{T} \frac{\| a(\xi \mid \tau, y(\cdot)) - a(\xi \mid t, x(\cdot)) \|^2}{(T - \xi)^{(1 - \alpha - \beta) q}} \, \rd \xi
            \label{lemma_proof_ci_differentiability_second_term_4} \\[0.2em]
            & \quad \leq \frac{M (\tau - t)^\alpha}{\Gamma(\alpha + 1)}
            \int_{\tau}^{T} \frac{\| a(\xi \mid \tau, y(\cdot)) - a(\xi \mid t, x(\cdot)) \|}{(T - \xi)^{(1 - \alpha - \beta) q}} \, \rd \xi
            \nonumber \\[0.2em]
            & \quad \leq \frac{M^2 (\tau - t)^{1 + \alpha}}{(\Gamma (\alpha + 1))^2}
            \int_{\tau}^{T} \frac{\rd \xi}{(\xi - \tau)^{1 - \alpha} (T - \xi)^{(1 - \alpha - \beta) q}}
            \nonumber \\[0.2em]
            & \quad \leq \frac{B(\alpha, 1 - (1 - \alpha - \beta) q) M^2 T^\alpha (\tau - t)^{1 + \alpha}}{(\Gamma (\alpha + 1))^2 \theta^{(1 - \alpha - \beta) q}}.
            \nonumber
        \end{align}
        Based on~\cref{tilde_C_1,Delta_a_minus_z}, we derive
        \begin{align}
            & \int_{\tau}^{T}
            \frac{\bigl\| a(\xi \mid \tau, y(\cdot)) - a(\xi \mid t, x(\cdot)) - \frac{z(\tau)}{\Gamma(\alpha) (\xi - t)^{1 - \alpha}} \bigr\|}
            {(T - \xi)^{(1 - \alpha - \beta) q}} \, \rd \xi
            \label{lemma_proof_ci_differentiability_second_term_5} \\[0.2em]
            & \hspace*{-2.7em} \leq \frac{2 M (\tau - t)}{\Gamma(\alpha)}
            \biggl( \int_{\tau}^{T} \frac{\rd \xi}{(\xi - \tau)^{1 - \alpha} (T - \xi)^{(1 - \alpha - \beta) q}}
            - \int_{\tau}^{T} \frac{\rd \xi}{(\xi - t)^{1 - \alpha} (T - \xi)^{(1 - \alpha - \beta) q}} \biggr)
            \nonumber \\[0.2em]
            & \hspace*{-2.7em} \leq \frac{2 M (\tau - t)}{\Gamma(\alpha)}
            \biggl( A_1 (\tau - t) + \int_{t}^{\tau} \frac{\rd \xi}{(\xi - t)^{1 - \alpha} (T - \xi)^{(1 - \alpha - \beta) q}} \biggr)
            \nonumber \\[0.2em]
            & \hspace*{-2.7em} \leq \frac{2 M A_1 (\tau - t)^2}{\Gamma(\alpha)}
            + \frac{2 M (\tau - t)^{1 + \alpha}}{\Gamma(\alpha + 1) \theta^{(1 - \alpha - \beta) q}}.
            \nonumber
        \end{align}
        Relations~\cref{lemma_proof_ci_differentiability_second_term_1,lemma_proof_ci_differentiability_second_term_2,lemma_proof_ci_differentiability_second_term_3,lemma_proof_ci_differentiability_second_term_4,lemma_proof_ci_differentiability_second_term_5} imply that there exists $A_3 > 0$ such that~\cref{lemma_proof_ci_differentiability_second_term} holds.

        Hence, to complete the proof, it remains to verify continuity of the mapping $\nabla^\alpha \mu_\varepsilon^{(\tau_\ast, y_\ast(\cdot))} \colon [0, T) \times AC^\alpha \to \mathbb{R}^n$.
        Let $(t_0, x_0(\cdot)) \in [0, T) \times AC^\alpha$ and let a sequence $\{(t_i, x_i(\cdot))\}_{i \in \mathbb{N}} \subset [0, T] \times AC^\alpha$ be such that $|t_i - t_0| + \|x_i(\cdot) - x_0(\cdot)\|_\infty \to 0$ as $i \to \infty$.
        Note that we can fix $\theta \in (0, T - t_0)$ and suppose that $t_i \in [0, T - \theta]$ for all $i \in \mathbb{N} \cup \{0\}$.
        For every $i \in \mathbb{N} \cup \{0\}$, consider the function
        \begin{displaymath}
            g_i(\xi)
            \triangleq \frac{q( a(\xi \mid t_i, x_i(\cdot)) - a_\ast(\xi))}
            {\Gamma(\alpha) \bigl( \varepsilon^{\frac{2}{q - 1}} + \|a(\xi \mid t_i, x_i(\cdot)) - a_\ast(\xi)\|^2 \bigr)^{1 - \frac{q}{2}}},
            \quad \xi \in [0, T].
        \end{displaymath}
        Then, according to~\cref{lemma_nu_properties_item_c_derivative_x}, we have, for all $i \in \mathbb{N} \cup \{0\}$,
        \begin{equation} \label{lemma_proof_nabla_via_g_1}
            \nabla^\alpha \mu_\varepsilon^{(\tau_\ast, y_\ast(\cdot))} (t_i, x_i(\cdot))
            = \frac{g_i(T)}{(T - t_i)^{1 - \alpha}}
            + \int_{t_i}^{T} \frac{g_i(\xi)}{(\xi - t_i)^{1 - \alpha} (T - \xi)^{(1 - \alpha - \beta) q}} \, \rd \xi.
        \end{equation}
        Observe that, continuity of mapping~\cref{a_mapping} yields
        \begin{equation} \label{lemma_proof_nabla_via_g_2}
            \|g_i(\cdot) - g_0(\cdot)\|_\infty
            \to 0
            \text{ as } i \to \infty.
        \end{equation}
        Therefore, in particular, we obtain
        \begin{equation} \label{lemma_proof_nabla_via_g_3}
            \biggl\| \frac{g_i(T)}{(T - t_i)^{1 - \alpha}} - \frac{g_0(T)}{(T - t_0)^{1 - \alpha}} \biggr\|
            \to 0
            \text{ as } i \to \infty.
        \end{equation}
        Further, for every $i \in \mathbb{N}$, taking~\cref{Beta} into account, we get
        \begin{align}
            & \biggl\| \int_{t_i}^{T} \frac{g_i(\xi)}{(\xi - t_i)^{1 - \alpha} (T - \xi)^{(1 - \alpha - \beta) q}} \, \rd \xi
            - \int_{t_0}^{T} \frac{g_0(\xi)}{(\xi - t_0)^{1 - \alpha} (T - \xi)^{(1 - \alpha - \beta) q}} \, \rd \xi \biggr\|
            \label{lemma_proof_nabla_via_g_4} \\[0.3em]
            & \hspace*{-2em} \leq B(\alpha, 1 - (1 - \alpha - \beta) q) (T - t_i)^{\alpha - (1 - \alpha - \beta) q} \|g_i(\cdot) - g_0(\cdot)\|_\infty
            \nonumber \\[0.3em]
            & \hspace*{-2em} \quad + \biggl\| \int_{t_i}^{T} \frac{g_0(\xi)}{(\xi - t_i)^{1 - \alpha} (T - \xi)^{(1 - \alpha - \beta) q}} \, \rd \xi
            - \int_{t_0}^{T} \frac{g_0(\xi)}{(\xi - t_0)^{1 - \alpha} (T - \xi)^{(1 - \alpha - \beta) q}} \, \rd \xi \biggr\|.
            \nonumber
        \end{align}
        In addition, due to~\cref{tilde_C_1}, we derive (we assume that $t_i \geq t_0$ for definiteness)
        \begin{align}
            & \biggl\| \int_{t_i}^{T} \frac{g_0(\xi)}{(\xi - t_i)^{1 - \alpha} (T - \xi)^{(1 - \alpha - \beta) q}} \, \rd \xi
            - \int_{t_0}^{T} \frac{g_0(\xi)}{(\xi - t_0)^{1 - \alpha} (T - \xi)^{(1 - \alpha - \beta) q}} \, \rd \xi \biggr\|
            \label{lemma_proof_nabla_via_g_5} \\[0.2em]
            & \hspace*{-2.3em} \leq \|g_0(\cdot)\|_\infty
            \biggl( \int_{t_i}^{T} \frac{\rd \xi}{(\xi - t_i)^{1 - \alpha} (T - \xi)^{(1 - \alpha - \beta) q}}
            - \int_{t_i}^{T} \frac{\rd \xi}{(\xi - t_0)^{1 - \alpha} (T - \xi)^{(1 - \alpha - \beta) q}} \biggr)
            \nonumber \\[0.2em]
            & \hspace*{-2.3em} \quad + \|g_0(\cdot)\|_\infty \int_{t_0}^{t_i} \frac{\rd \xi}{(\xi - t_0)^{1 - \alpha} (T - \xi)^{(1 - \alpha - \beta) q}}
            \nonumber \\[0.2em]
            & \hspace*{-2.3em} \leq \|g_0(\cdot)\|_\infty
            \biggl( A_1 (t_i - t_0) + 2 \int_{t_0}^{t_i} \frac{\rd \xi}{(\xi - t_0)^{1 - \alpha} (T - \xi)^{(1 - \alpha - \beta) q}} \biggr)
            \nonumber \\[0.2em]
            & \hspace*{-2.3em} \leq \|g_0(\cdot)\|_\infty
            \biggl( A_1 (t_i - t_0) + \frac{2 (t_i - t_0)^\alpha}{\alpha \theta^{(1 - \alpha - \beta) q}} \biggr).
            \nonumber
        \end{align}
        Thus, it follows from~\cref{lemma_proof_nabla_via_g_1,lemma_proof_nabla_via_g_2,lemma_proof_nabla_via_g_3,lemma_proof_nabla_via_g_4,lemma_proof_nabla_via_g_5} that
        \begin{displaymath}
            \|\nabla^\alpha \mu_\varepsilon^{(\tau_\ast, y_\ast(\cdot))} (t_i, x_i(\cdot))
            - \nabla^\alpha \mu_\varepsilon^{(\tau_\ast, y_\ast(\cdot))} (t_0, x_0(\cdot))\|
            \to 0
            \text{ as } i \to \infty.
        \end{displaymath}
        The lemma is proved.
    \end{proof}

    \begin{proof}[Proof of~\cref{lemma_gradient_properties}]
        Given a number $\theta \in (0, T)$, put
        \begin{equation} \label{C_3}
            C_3
            \triangleq \frac{q}{\Gamma(\alpha) \theta^{1 - \alpha}}
            \Bigl( 1 + T^{\frac{1}{q} - 1 + \alpha + \beta} \bigl( B(1 - (1 - \alpha) q, 1 - (1 - \alpha - \beta) q) \bigr)^{\frac{1}{q}} \Bigr)
        \end{equation}
        and
        \begin{equation} \label{C_4}
            C_4
            \triangleq \frac{q}{\Gamma(\alpha)} \biggl( \frac{(1 - \alpha) T^{1 - \alpha}}{\theta^{2 - \alpha}}
            + A_1 T^{1 - \alpha} + \frac{2}{\alpha \theta^{(1 - \alpha - \beta) q}} \biggr),
        \end{equation}
        where the number $A_1$ is taken from~\cref{tilde_C_1}.

        Fix $\varepsilon > 0$ and $(t, x(\cdot))$, $(\tau, y(\cdot)) \in [0, T - \theta] \times AC^\alpha$.
        According to~\cref{lemma_nu_properties_item_c_derivative_x}, we obtain
        \begin{align}
            & \| \nabla^\alpha \mu_\varepsilon^{(\tau, y(\cdot))} (t, x(\cdot)) \|
            \label{proof_d_1} \\[0.1em]
            & \quad \leq \frac{q}{\Gamma(\alpha)}
            \biggl( \frac{\bigl( \varepsilon^{\frac{2}{q - 1}} + \|a(T \mid t, x(\cdot)) - a(T \mid \tau, y(\cdot))\|^2 \bigr)^{\frac{q - 1}{2}}}
            {(T - t)^{1 - \alpha}}
            \nonumber \\[0.1em]
            & \qquad + \int_{t}^{T}
            \frac{\bigl( \varepsilon^{\frac{2}{q - 1}} + \|a(\xi \mid t, x(\cdot)) - a(\xi \mid \tau, y(\cdot))\|^2 \bigr)^{\frac{q - 1}{2}}}
            {(\xi - t)^{1 - \alpha} (T - \xi)^{(1 - \alpha - \beta) q}} \, \rd \xi \biggr).
            \nonumber
        \end{align}
        Similarly to~\cref{proof_f_1}, we have
        \begin{align}
            & \frac{\bigl( \varepsilon^{\frac{2}{q - 1}} + \|a(T \mid t, x(\cdot)) - a(T \mid \tau, y(\cdot))\|^2 \bigr)^{\frac{q - 1}{2}}}
            {(T - t)^{1 - \alpha}}
            \label{proof_d_2} \\[0.1em]
            & \quad \leq \frac{\bigl( \nu_\varepsilon(t, x(\cdot), \tau, y(\cdot)) + C_1 \varepsilon^{\frac{q}{q - 1}} \bigr)^{\frac{q - 1}{q}}}{\theta^{1 - \alpha}}.
            \nonumber
        \end{align}
        Applying H\"{o}lder's inequality, we conclude that
        \begin{align}
            & \int_{t}^{T} \frac{\bigl( \varepsilon^{\frac{2}{q - 1}} + \|a(\xi \mid t, x(\cdot)) - a(\xi \mid \tau, y(\cdot))\|^2 \bigr)^{\frac{q - 1}{2}}}
            {(\xi - t)^{1 - \alpha} (T - \xi)^{(1 - \alpha - \beta) q}} \, \rd \xi
            \label{proof_d_3} \\[0.1em]
            & \quad \leq \biggl( \int_{t}^{T} \frac{\rd \xi}{(\xi - t)^{(1 - \alpha) q} (T - \xi)^{(1 - \alpha - \beta) q}} \biggr)^{\frac{1}{q}}
            \nonumber \\[0.3em]
            & \qquad \times \biggl( \int_{t}^{T} \frac{ \bigl( \varepsilon^{\frac{2}{q - 1}} + \|a(\xi \mid t, x(\cdot)) - a(\xi \mid \tau, y(\cdot))\|^2 \bigr)^{\frac{q}{2}}}
            {(T - \xi)^{(1 - \alpha - \beta) q}} \, \rd \xi \biggr)^{\frac{q - 1}{q}}.
            \nonumber
        \end{align}
        By~\cref{Beta}, we get
        \begin{align}
            & \int_{t}^{T} \frac{\rd \xi}{(\xi - t)^{(1 - \alpha) q} (T - \xi)^{(1 - \alpha - \beta) q}}
            \label{proof_d_4} \\[0.3em]
            & \quad \leq \frac{T^{1 - (1 - \alpha - \beta) q} B(1 - (1 - \alpha) q, 1 - (1 - \alpha - \beta) q)}{\theta^{(1 - \alpha) q}}.
            \nonumber
        \end{align}
        Moreover, in view of~\cref{nu}, it holds that
        \begin{align}
            & \int_{t}^{T} \frac{ \bigl( \varepsilon^{\frac{2}{q - 1}} + \|a(\xi \mid t, x(\cdot)) - a(\xi \mid \tau, y(\cdot))\|^2 \bigr)^{\frac{q}{2}}}
            {(T - \xi)^{(1 - \alpha - \beta) q}} \, \rd \xi
            \label{proof_d_5} \\[0.3em]
            & \quad \leq \nu_\varepsilon(t, x(\cdot), \tau, y(\cdot)) + C_1 \varepsilon^{\frac{q}{q - 1}}.
            \nonumber
        \end{align}
        Putting together~\cref{proof_d_1,proof_d_2,proof_d_3,proof_d_4,proof_d_5}, we find that~\cref{gradient_bound} is fulfilled with $C_3$ from~\cref{C_3}.

        Further, based on~\cref{lemma_nu_properties_item_c_derivative_x}, similarly to~\cref{lemma_proof_nabla_via_g_5}, we derive (we assume that $\tau \geq t$ for definiteness)
        \begin{align*}
            & \| \nabla^\alpha \mu_\varepsilon^{(\tau, y(\cdot))} (t, x(\cdot))
            + \nabla^\alpha \mu_\varepsilon^{(t, x(\cdot))} (\tau, y(\cdot)) \| \\[0.3em]
            & \quad \leq \frac{q}{\Gamma(\alpha)}
            \max_{\xi \in [0, T]}
            \bigl( \varepsilon^{\frac{2}{q - 1}} + \|a(\xi \mid t, x(\cdot)) - a(\xi \mid \tau, y(\cdot))\|^2 \bigr)^{\frac{q - 1}{2}} \\[0.2em]
            & \qquad \times \biggl( \frac{1}{(T - \tau)^{1 - \alpha}} - \frac{1}{(T - t)^{1 - \alpha}}
            + A_1 (\tau - t) + \frac{2 (\tau - t)^\alpha}{\alpha \theta^{(1 - \alpha - \beta) q}} \biggr),
        \end{align*}
        wherefrom, owing to the estimate
        \begin{displaymath}
            \frac{1}{(T - \tau)^{1 - \alpha}} - \frac{1}{(T - t)^{1 - \alpha}}
            \leq \frac{(1 - \alpha) (\tau - t)}{\theta^{2 - \alpha}},
        \end{displaymath}
        we obtain that~\cref{difference_of_gradients} is valid with $C_4$ given by~\cref{C_4}.
        The lemma is proved.
    \end{proof}


\begin{thebibliography}{10}

\bibitem{Bayraktar_Keller_2018}
    {\sc E.~Bayraktar and C.~Keller},
    {\em Path-dependent {Hamilton--Jacobi} equations in infinite dimensions},
    J. Funct. Anal., 275 (2018), pp.~2096--2161, \\
    \url{https://doi.org/10.1016/j.jfa.2018.07.010}.

\bibitem{Cardaliaguet_Rainer_2018}
    {\sc P.~Cardaliaguet and C.~Rainer},
    {\em Zero-sum differential games},
    in Handbook of Dynamic Game Theory, T.~Ba{\c{s}}ar and G.~Zaccour, eds.,
    Springer, Cham, 2018, pp.~373--430, \\
    \url{https://doi.org/10.1007/978-3-319-44374-4_4}.

\bibitem{Cosso_Russo_2019_Osaka}
    {\sc A.~Cosso and F.~Russo},
    {\em Strong-viscosity solutions: classical and path-dependent {PDEs}},
    Osaka J. Math., 56 (2019), pp.~323--373.

\bibitem{Crandall_Lions_1983}
    {\sc M.~G.~Crandall and P.-L.~Lions},
    {\em Viscosity solutions of {Hamilton--Jacobi} equations},
    Trans. Amer. Math. Soc., 277 (1983), pp.~1--42, \\
    \url{https://doi.org/10.1090/s0002-9947-1983-0690039-8}.

\bibitem{Diethelm_2010}
    {\sc K.~Diethelm},
    {\em The Analysis of Fractional Differential Equations:
    {An} Application-Oriented Exposition Using Differential Operators of {Caputo} Type},
    vol.~2004 of Lecture Notes in Math., Springer, Berlin, 2010,
    \url{https://doi.org/10.1007/978-3-642-14574-2}.

\bibitem{Ekren_Touzi_Zhang_2016_1}
    {\sc I.~Ekren, N.~Touzi, and J.~Zhang},
    {\em Viscosity solutions of fully nonlinear parabolic path dependent {PDEs}: {Part I}},
    Ann. Probab., 44 (2016), pp.~1212--1253, \\
    \url{https://doi.org/10.1214/14-AOP999}.

\bibitem{Gomoyunov_2020_SIAM}
    {\sc M.~I.~Gomoyunov},
    {\em Dynamic programming principle and {Hamilton--Jacobi--Bellman} equations for fractional-order systems},
    SIAM J. Control Optim., 58 (2020), pp.~3185--3211, \\
    \url{https://doi.org/10.1137/19M1279368}.

\bibitem{Gomoyunov_2020_IMM_Eng}
    {\sc M.~I.~Gomoyunov},
    {\em Minimax solutions of homogeneous {Hamilton--Jacobi} equations with fractional-order coinvariant derivatives},
    Trudy Inst. Mat. i Mekh. UrO RAN, 26 (2020), pp.~106--125,
    \url{https://doi.org/10.21538/0134-4889-2020-26-4-106-125}.
    (in Russian)

\bibitem{Gomoyunov_2020_DE}
    {\sc M.~I.~Gomoyunov},
    {\em To the theory of differential inclusions with {Caputo} fractional derivatives},
    Diff. Equat., 56 (2020), pp.~1387--1401,
    \url{https://doi.org/10.1134/S0012266120110014}.

\bibitem{Gomoyunov_2021_IMM_Eng}
    {\sc M.~I.~Gomoyunov},
    {\em Criteria of minimax solutions for {Hamilton--Jacobi} equations with coinvariant fractional-order derivatives},
    Trudy Inst. Mat. i Mekh. UrO RAN, 27 (2021), pp.~25--42, \\
    \url{https://doi.org/10.21538/0134-4889-2021-27-3-25-42}.
    (in Russian)

\bibitem{Gomoyunov_2021_Mathematics}
    {\sc M.~I.~Gomoyunov},
    {\em Differential games for fractional-order systems: {Hamilton--Jacobi--Bellman--Isaacs} equation and optimal feedback strategies},
    Mathematics, 9 (2021), 1667,
    \url{https://doi.org/10.3390/math9141667}.

\bibitem{Gomoyunov_2021_ESAIM}
    {\sc M.~I.~Gomoyunov},
    {\em Minimax solutions of {Hamilton--Jacobi} equations with fractional coinvariant derivatives},
    arXiv:2011.11306,
    \url{https://arxiv.org/abs/2011.11306}.

\bibitem{Gomoyunov_Lukoyanov_2021}
    {\sc M.~I.~Gomoyunov and N.~Y.~Lukoyanov},
    {\em Differential games in fractional-order systems: inequalities for directional derivatives of the value functional},
    Proc. Steklov Inst. Math., 315 (2021).
    (to appear)

\bibitem{Gomoyunov_Lukoyanov_Plaksin_2021}
    {\sc M.~I.~Gomoyunov, N.~Y.~Lukoyanov, and A.~R.~Plaksin},
    {\em Path-dependent {Hamilton--Jacobi} equations: the minimax solutions revised},
    Appl. Math. Optim., (2021), \\
    \url{https://doi.org/10.1007/s00245-021-09794-4}.

\bibitem{Isaacs_1965}
    {\sc R.~Isaacs},
    {\em Differential Games: {A} Mathematical Theory with Applications to Warfare and Pursuit, Control and Optimization},
    John Wiley and Sons, New York, 1965.

\bibitem{Kaise_2015}
    {\sc H.~Kaise},
    {\em Path-dependent differential games of inf-sup type and {Isaacs} partial differential equations},
    in Proceedings of the 54th IEEE Conference on Decision and Control (CDC), Osaka, Japan, 2015, pp.~1972--1977,
    \url{https://doi.org/10.1109/CDC.2015.7402496}.

\bibitem{Kilbas_Srivastava_Trujillo_2006}
    {\sc A.~A.~Kilbas, H.~M.~Srivastava, and J.~J.~Trujillo},
    {\em Theory and Applications of Fractional Differential Equations},
    vol.~204 of North-Holland Math. Stud., Elsevier, Amsterdam, 2006.

\bibitem{Kim_1999}
    {\sc A.~V.~Kim},
    {\em Functional Differential Equations: {Application} of {i}-Smooth Calculus},
    vol.~479 of Math. Appl., Kluwer Academic Publishers, Dordrecht, The Netherlands, 1999, \\
    \url{https://doi.org/10.1007/978-94-017-1630-7}.

\bibitem{Krasovskii_Subbotin_1988}
    {\sc N.~N.~Krasovskii and A.~I.~Subbotin},
    {\em Game-Theoretical Control Problems},
    Springer Ser. Soviet Math., Springer, New York, 1988.

\bibitem{Lukoyanov_2003_1}
    {\sc N.~Y.~Lukoyanov},
    {\em Functional {Hamilton--Jacobi} type equations in $ci$-derivatives for systems with distributed delays},
    Nonlinear Funct. Anal. Appl., 8 (2003), pp.~365--397.

\bibitem{Lukoyanov_2007_IMM_Eng}
    {\sc N.~Y.~Lukoyanov},
    {\em On viscosity solution of functional {Hamilton--Jacobi} type equations for hereditary systems},
    Proc. Steklov Inst. Math., 259 (2007), pp.~S190--S200, \\
    \url{https://doi.org/10.1134/s0081543807060132}.

\bibitem{Lukoyanov_2010_IMM_Eng_2}
    {\sc N.~Y.~Lukoyanov},
    {\em On optimality conditions for the guaranteed result in control problems for time-delay systems},
    Proc. Steklov Inst. Math., 268 (2010), pp.~S175--S187, \\
    \url{https://doi.org/10.1134/s0081543810050135}.

\bibitem{Pham_Zhang_2014}
    {\sc T.~Pham and J.~Zhang},
    {\em Two person zero-sum game in weak formulation and path dependent {Bellman--Isaacs} equation},
    SIAM J. Control Optim., 52 (2014), pp.~2090--2121, \\
    \url{https://doi.org/10.1137/120894907}.

\bibitem{Plaksin_2021}
    {\sc A.~R.~Plaksin},
    {\em Viscosity solutions of {Hamilton--Jacobi--Bellman--Isaacs} equations for time-delay systems},
    SIAM J. Control Optim., 59 (2021), pp.~1951--1972, \\
    \url{https://doi.org/10.1137/20M1311880}.

\bibitem{Samko_Kilbas_Marichev_1993}
    {\sc S.~G.~Samko, A.~A.~Kilbas, and O.~I.~Marichev},
    {\em Fractional Integrals and Derivatives: {Theory} and Applications},
    Gordon and Breach Science Publishers, Yverdon, Switzerland, 1993.

\bibitem{Saporito_2019}
    {\sc Y.~F.~Saporito},
    {\em Stochastic control and differential games with path-dependent influence of controls on dynamics and running cost},
    SIAM J. Control Optim., 57 (2019), pp.~1312--1327,
    \url{https://doi.org/10.1137/18M1186186}.

\bibitem{Soner_1988}
    {\sc H.~M.~Soner},
    {\em On the {Hamilton--Jacobi--Bellman} equations in {Banach} spaces},
    J. Optim. Theory Appl., 57 (1988), pp.~429--437,
    \url{https://doi.org/10.1007/bf02346162}.

\bibitem{Subbotin_1995}
    {\sc A.~I.~Subbotin},
    {\em Generalized Solutions of First Order {PDEs}: {T}he Dynamical Optimization Perspective},
    Systems Control Found. Appl., Birkh\"{a}user, Basel, 1995, \\
    \url{https://doi.org/10.1007/978-1-4612-0847-1}.

\bibitem{Tang_Zhang_2015}
    {\sc S.~Tang and F.~Zhang},
    {\em Path-dependent optimal stochastic control and viscosity solution of associated {Bellman} equations},
    Discrete Contin. Dyn. Syst., 35 (2015), pp.~5521--5553, \\
    \url{https://doi.org/10.3934/dcds.2015.35.5521}.

\bibitem{Yong_2015}
    {\sc J.~Yong},
    {\em Differential Games: {A} Concise Introduction},
    World Scientific, Hackensack, NJ, 2015,
    \url{https://doi.org/10.1142/9121}.

\end{thebibliography}
\end{document}